\newcommand{\ZZ}{\mathbb Z}
\newcommand{\CC}{\mathbb C}
\newcommand{\cpt}{\mathbb K}
\def\bK{\mathbf K}
\def\Aut{\textup{Aut}}
\def\cos{\textup{cos}}
\def\sin{\textup{sin}}
\def\Coh{\mathtt{Coh}}
\def\RHom{\textup{RHom}}
\def\GL{\textup{GL}}
\def\SL{\textup{SL}}
\def\Int{\textup{Int}}
\def\per{\textup{per}}
\def\Pro{\textup{Pro}}
\def\dg{\textup{dg}}
\def\Cone{\textup{Cone}}
\def\C{\textup{C}}
\def\KKcat{\mathtt{KK_{C^*}}}
\def\Z{\textup{Z}}
\def\ev{\textup{ev}}
\def\id{\mathrm{id}}
\def\End{\textup{End}}
\def\Hom{\textup{Hom}}
\def\PHoSS{\mathtt{HoSSet_{prop}}}
\def\SSets{\mathtt{SSet}}
\def\SSpro{\mathtt{SSet_{prop}}}
\def\ker{\textup{ker}}
\def\op{\textup{op}}
\def\sd{\textup{Sd}}
\def\rep{\textup{rep}}
\def\Ab{\sf Ab}
\def\Hmo{\mathtt{Hmo}}
\def\HoPA{\mathtt{{HoProC^*}^{stab}_{prop}}}
\def\K{\textup{K}}
\def\NCS{\mathtt{NCS_{dg}}}
\def\DGcorr{\mathtt{NCC^{{\mathbf K}}_{dg}}}
\def\CSp{\mathtt{NCS_{C^*}}}
\def\NCC{\mathtt{NCC}}
\def\HoSpt{\mathtt{HoSpt}}
\def\DGcat{\mathtt{DGcat}}
\def\PAlg{\mathtt{ProC^*_{prop}}}
\def\SPAlg{\mathtt{{ProC^*}^{stab}_{prop}}}
\def\KK{\textup{KK}}
\def\E{\textup{E}}
\def\H{\textup{H}}
\def\End{\textup{End}}
\def\Sets{\mathtt{Set}}
\def\cC{\mathcal C}
\def\1{\bf{1}}
\def\Csep{\mathtt{Sep_{C^*}}}
\def\Pr{{\sf Top_{dg}^{{\bK}}}}
\def\cRep{{\sf{Rep_{proC^*}}}}
\def\ND{\textup{Nd}}
\def\cD{\mathcal D}
\def\DGR{{\sf{Top_{fib}^{{\bK}}}}}
\def\cM{\mathcal M}
\def\TT{\mathbb{T}}
\def\A{\mathcal{A}}
\def\cA{\mathcal{A}}
\newcommand{\map}{\rightarrow}
\newcommand{\functor}{\longrightarrow}
\newcommand{\beq}{\begin{eqnarray}}
\newcommand{\beqn}{\begin{eqnarray*}}
\newcommand{\eeq}{\end{eqnarray}}
\newcommand{\eeqn}{\end{eqnarray*}}
\newtheorem{thm}{Theorem}[section]
\newtheorem{lem}[thm]{Lemma}
\newtheorem{prop}[thm]{Proposition}
\newtheorem{cor}[thm]{Corollary}
\newtheorem{ex}[thm]{Example}
\newtheorem{defn}[thm]{Definition}
\newtheorem{rem}[thm]{Remark}
\begin{document}

\title{Noncommutative correspondence categories, simplicial sets and pro $C^*$-algebras}
\author{Snigdhayan Mahanta}

\maketitle


\begin{abstract}
We construct an additive functor from the category of separable $C^*$-algebras with morphisms enriched over Kasparov's $\KK_0$-groups to the noncommutative correspondence category $\DGcorr$, whose objects are small DG categories and morphisms are given by the equivalence classes of some DG bimodules up to a certain $\K$-theoretic identification. Motivated by a construction of Cuntz we associate a pro $C^*$-algebra to any simplicial set, which is functorial with respect to proper maps of simplicial sets and those of pro $C^*$-algebras. This construction respects homotopy between proper maps after enforcing matrix stability on the category of pro $C^*$-algebras. The first result can be used to deduce derived Morita equivalence between DG categories of topological bundles associated to separable $C^*$-algebras up to a $\K$-theoretic identification from the knowledge of $\KK$-equivalence between the $C^*$-algebras. The second construction gives an indication that one can possibly develop a {\it noncommutative proper homotopy theory} in the context of topological algebras, e.g., pro $C^*$-algebras. 

\end{abstract}

\begin{center}
{\bf Introduction}
\end{center}

Experts believe that in any category of noncommutative spaces correspondence-like morphisms should also be included, e.g., `motivated morphisms' in \cite{MMM}. Such morphisms are given by some bimodules or generalizations thereof that induce well-defined morphisms on most (co)homological constructions that we know. Such correspondences can be naturally seen as morphisms over some desirable enrichments of the category of noncommutative spaces, for instance, additive or spectral ones. Furthermore, most of the interesting (co)homological invariants that we know should factor through this category. The philosophy closely adheres to that of motivic homotopy theory \cite{FSV} but including noncommutative objects. More precisely as a category of noncommutative correspondences in the operator algebraic setting we consider the well-known category $\KKcat$ with Kasparov's bivariant $\K$-theory as morphisms, which, in fact, has a triangulated structure \cite{MeyNes}. This category appears in the context of noncommutative motives in \cite{ConConMar1}. The additivized Morita homotopy category of DG (differential graded) categories $\Hmo_0$, whose construction was outlined in \cite{TabThesis}, is potentially a good candidate for the category of noncommutative correspondences in the setting of DG categories. Kontsevich calls the same category as the category of {\it noncommutative motives} in \cite{KonMot}. All {\it additive invariants} of DG categories factor through this category \cite{TabThesis}. However, for the applications that we have in mind a more decisive localization is needed. In certain applications to (co)homological duality statements in physics one would like to construct morphisms which induce isomorphisms between topological $\K$-theories (or other generalized cohomology theories). Typically one is interested in topological $\K$-theory since D-brane charges are classified by them. Therefore, we invert morphisms which induce homotopy equivalences of Waldhausen's $\K$-theory spectra \cite{Tab3} of DG categories and further perform a group completion of the morphisms. Although we have chosen to localize morphisms which induce homotopy equivalences of $\K$-theory spectra, we refer to the resulting category henceforth simply as {\it noncommutative correspondence category} suppressing the dependence on the choice of the localization in order to avoid verbiage. Nevertheless we make the dependence explicit in the notation as it is denoted by $\DGcorr$. It is plausible that for different applications localizing along different sets of morphisms can be useful. From our point of view, one can obtain duality results fairly easily if one is less ambitious, i.e., instead of trying to prove equivalence of DG or $\textup{A}_\infty$-categories on the nose if one tries to prove them after suitable localization one can be more successful.

In the first section we begin with the description of the noncommutative DG correspondence category $\DGcorr$ and the noncommutative $C^*$-correspondence category $\KKcat$. Then we construct our additive functor $\DGR:\KKcat\functor\DGcorr$ (see Theorem \ref{MainThm}). The functor $\DGR(\pounds)$ roughly sends a unital $C^*$-algebra to its bounded DG category of complexes of vector bundles (or finitely generated projective modules). At the heart of this construction lies Quillen's description of the $K_0$-group of nonunital algebras \cite{QuiNonunitalK0}, which is particularly well behaved for $C^*$-algebras. The interest lies mostly in isomorphisms and by functoriality this result implies that an isomorphism in $\KKcat$ would translate to an isomorphism of DG categories in $\DGcorr$. It should be clarified that isomorphisms in $\DGcorr$ are weaker than those in $\Hmo_0$. In the classification programme of $C^*$-algebras the Kirchberg--Phillips Theorem states that two stable {\it Kirchberg algebras} are $\KK$-equivalent if and only if they are $*$-isomorphic \cite{KirPhi,NCPClassification} and therefore $\KK$-equivalent stable Kirchberg algebras will have isomorphic (via DG functors) DG categories of topological bundles.
This result is probably by itself not very interesting as it is purely a topological statement. However, by incorporating more structures into a $C^*$-algebra (for example, a curved topological DGA) one can hopefully produce more instances of noncommutative dualities as in \cite{Blo1,Blo2,BloDan} simply from the well studied $\KK$-isomorphisms. Purely in the operator algebraic context the connection between $\KK$-dualities and noncommutative $\textup{T}$-dualities have been explored in \cite{BMRS,BMRS2}. The functor $\DGR$, although not full, seems to be related to some nontrivial isomorphisms like the Fourier--Mukai type dualities for tori (see Example \ref{TorusEx}). One can also deduce that for any nuclear separable $C^*$-algebra $\pounds$ the DG category $\DGR(\pounds)$ is invariant in $\DGcorr$ under {\it strong deformation} if $\pounds$ and its strong deformation are suitably homotopy equivalent (see subsection \ref{deformation}). Similar results at the level of $\K$-theory groups already exist in the literature \cite{DadLor} (see also \cite{RosKThQuant} for a survey). We also discuss the concept of homological $\TT$-dualities and its connection with that of topological $C^*$-correspondences briefly (see subsection \ref{T-duality}). It would be interesting to extend our construction of $\DGR(\pounds)$ to the context of purely algebraic bivariant $\K$-theory developed by Corti{\~n}as--Thom \cite{CorTho}, at least in the $\H$-unital case. 

The second section is inspired by some emergent connections between homotopy theory and {\it noncommutative topology}. Several problems in noncommutative topology, including the Baum--Connes conjecture, have some level of properness built into them. From our point of view the objects themselves need not be finite (or compact) but the maps interconnecting them should be proper in some suitable manner and compose well to form a category. Cuntz constructed a universal noncommutative $C^*$-algebra using generators and relations from a locally finite simplicial complex to give a conceptual understanding of the Baum--Connes assembly map in \cite{CunSimp}. We associate a noncommutative pro $C^*$-algebra to any simplicial set (without any finiteness assumption), which is functorial with respect to proper maps of simplicial sets (see Definition \ref{PropMap}). Then we show that our construction induces a functor between the category of simplicial sets with, what we call, proper homotopy classes of proper maps between them and the matrix stabilized category of pro $C^*$-algebras with homotopy classes of proper or nondegenerate maps between them. The constructions in this section are admittedly not always entirely satisfactory. Although we mostly deal with the category of pro $C^*$-algebras with proper $*$-homomorphisms between them, in order to enforce matrix stability we enlarge the morphisms by corner embeddings, which are not proper, and invert these maps formally. However, the formal inverses of the corner embeddings are in some sense proper. Another sticking point is that we cannot ensure that our construction produces continuous $*$-homomorphisms between pro $C^*$-algebras. Therefore we work with arbitrary $*$-homomorphisms which become automatically continuous if the domain pro $C^*$-algebra is actually a $\sigma$-$C^*$-algebra or a countable inverse limit of $C^*$-algebras. It seems plausible that without enlarging the category of $C^*$-algebras to include certain limits of topological $*$-algebras, it is not possible to construct a Quillen model category structure on it. After appropriate enlargement (containing pro $C^*$-algebras) there does exist a cofibrantly generated model category structure with $\KK_*$-equivalences (or $\K_*$-equivalences) as weak equivalences \cite{JoaJoh}. Our result might suggest that the matrix stabilized category of pro $C^*$-algebras with proper maps between them is amenable to {\it noncommutative proper (or infinite) homotopy theory} as explained in, e.g., \cite{BauQui}. The author recently learnt that a very general framework of homotopy theory in the context of $C^*$-algebras has been developed by {\O}stv{\ae}r \cite{Ost}.

Although the two sections presented in this article seem unrelated the author hopes to make the connection clearer in future.

A general remark is in order here. A $C^*$-algebra or an abstract DG category is not very geometric in nature. The appropriate objects for geometry should be something close to Connes' spectral triples (see, e.g., the reconstruction Theorem \cite{STRec}) and presumably a {\it pretriangulated} DG category whose homotopy category is {\it geometric} in the sense of Kontsevich \cite{KonNotes}. Therefore, our results should be viewed in the realm of noncommutative topology on which interesting geometric structures can be built. In particular, our correspondence categories should be regarded as {\it topological} correspondence categories. In order to make the constructions a bit more sensitive to the norm structures on the topological algebras, one might consider replacing the DG category of $\CC$-linear spaces (noncommutative point) throughout by the DG category associated to the category of locally convex topological vector spaces, which admits the structure of a {\it quasiabelian} category \cite{Schn,Pros}. 

\vspace{3mm}
\noindent
{\bf Notations and conventions:} We do not assume our algebras to be commutative or unital unless explicitly stated so. In Section \ref{NCC} we require our $C^*$-algebras to be separable, which from the point of view of topology requires the spaces to be {\it metrizable}. Many constructions in geometry require paracompactness (or an argument involving partition of unity) and from that perspective the separability assumption is quite natural. Moreover, the technical issues of $\KK$-theory are properly understood only in the separable case. The focus of Section \ref{Homotopy} is combinatorial topology and hence we do away with the separability assumption and in fact we work with a larger category of pro $C^*$-algebras or inverse limit $C^*$-algebras.  We are going to work over a ground field $k$ and while working with $C^*$-algebras the field $k$ will be tacitly assumed to be $\CC$. Unless otherwise stated, all functors are also assumed to be covariant and appropriately derived (whenever necessary), although we shall use the underived notation for brevity, e.g., we shall write $\otimes$ for $\otimes^{\textup{L}}$. The tensor products of $C^*$-algebras are suitably completed and since in all cases one of the algebras is nuclear we do not need to worry about the distinction between maximal and minimal tensor products. Throughout this article we make use of the language of model categories and simplicial homotopy theory whose details we have left out. The standard references for them are \cite{HomotopicalAlgebra,Hov,Hir,GoeJar}.

\vspace{3mm}
\noindent
{\bf Acknowledgements.} The author is extremely grateful to P. Goerss, B. Keller, Matilde Marcolli, R. Meyer, Fernando Muro, J. Rosenberg, G. Tabuada and B. To{\"e}n for several email correspondences answering various questions. The author would like to thank Beno{\^i}t Jacob, S. Krishnan and M. Schlichting for some helpful discussions. The author is particularly indebted to A. Connes, G. Elliott and R. Meyer for pointing out several inaccuracies in an initial draft of the article. The mistakes that might remain are solely the author's responsibility. The author also gratefully acknowledges the hospitality of the department of mathematics at the University of Toronto, Fields Institute and Institut des Hautes \'Etudes Scientifiques, where much of this work was carried out.

\section{Noncommutative correspondence categories} \label{NCC}

The general philosophy in this paradigm is that an object can be studied by its category of representations. This category of representations could be endowed with an abelian, triangulated, differential graded (DG) or some symmetric monoidal structure. It appears that dealing with abelian or triangulated categories is deficient from the homotopy theoretic point of view. It is better to work with the entire $\RHom$ complex of morphisms, which retains cochain level information rather than simply the zeroth cohomology group as in triangulated categories. Hence, from our perspective the appropriate category structure is that of a category enriched over cochain complexes, i.e., a DG category. 

It is often convenient to localize along certain morphisms for various purposes. For (co)homological constructions in geometry localizing along quasi-equivalences seems quite natural. This leads us to Keller's construction of the derived category of a DG category, which is essentially the category of modules (or representations) of the DG category up to homotopy. This derived category is itself is a triangulated category and for most practical purposes it suffices to work with this triangulated category of modules over the DG category or some suitable triangulated subcategory thereof. We recall some basic facts about DG categories and noncommutative geometry below. There is some freedom in choosing the way one would like to represent the known constructions in geometry in this setting. Let us reiterate that the noncommutative correspondence categories introduced below should be viewed simply as topological models.

\subsection{The category of small DG categories $\DGcat$}
The basic references for the background material, that we require, about the category of small DG categories are \cite{KelDG} and \cite{TabThesis}. In this setting the {\it noncommutative spaces} are viewed as small {\it DG categories}, i.e., categories enriched over the symmetric monoidal category of cochain complexes of $k$-linear spaces. Let $\DGcat$ stand for the category of all small DG categories. The morphisms in this category are {\it DG functors}, i.e., (enriched) functors inducing morphisms of $\Hom$-complexes. Henceforth, unless otherwise stated, all DG categories will be small. We provide one generic example of a class of DG categories which will be useful for later purposes.

\begin{ex} \label{DGex}
Given any $k$-linear category $\mathcal{M}$ it is possible to construct a DG category $\cC_{dg}(\mathcal{M})$ with cochain complexes $(M^\bullet,d_M)$ over $\mathcal{M}$ as objects and setting $\Hom(M^\bullet,N^\bullet) = \oplus_n \Hom(M^\bullet ,N^\bullet)_n$, where $\Hom(M^\bullet,N^\bullet)_n$ denotes the component of morphisms of degree $n$, i.e., $f_n:M^\bullet\map N^\bullet[n]$ and whose differential is the graded commutator $d_M\circ f_n - (-1)^nf_n\circ d_N$.  It is easily seen that the zeroth cocycle category $\Z^0(\cC_{dg}(\cM))$ reduces to the category of cochain complexes over $\cM$ and the zeroth cohomology category $\H^0(\cC_{dg}(\cM))$ produces the homotopy category of complexes over $\cM$. Thus, given any DG category $\cC$ the category $\H^0(\cC)$ is called the {\it homotopy} category of $\cC$.  If the objects of $\cC_{dg}(\cM)$ are taken to be chain (instead of cochain) complexes over $\cM$ one needs to set the $n$-th graded component of the morphism $\Hom(M_\bullet,N_\bullet)_n = \Hom(M_\bullet,N_\bullet[-n])$.
\end{ex}

 Now we recall the notion of the derived category of a DG category as in \cite{Kel}. Let $\cD$ be a small DG category. A right DG $\cD$-module is by definition a DG functor $M:\cD^{\op}\to
\cC_{dg}(k)$, where $\cC_{dg}(k)$ denotes the DG category of cochain complexes of
$k$-linear spaces. We denote the DG category of right DG modules over $\cD$ by $\textup{D}_{dg}(\cD)$. It generalizes the notion of a right module over an associative unital $k$-algebra $A$. Indeed, viewing $A$ as a category with one object $\ast$ such that $\End(\ast)=A$, a functor from the oposite category to $k$-linear spaces is just a $k$-linear space $M$ (the image of $\ast$) with a $k$-algebra homomorphism $A^{\op}\map\End_k (M)$ making $M$ a right $A$-module. Every object $X$ of $\cD$ defines
canonically what is called a {\it free} or {\it representable} right $\cD$-module $X^\wedge:= \Hom_{\cD}(-,X)$. A morphism of DG modules $f:L\to M$ is by definition a morphism (natural transformation) of DG functors such that $fX:LX\to MX$ is a morphism of complexes for all $X\in\textup{Obj}(\cD)$. We call such an $f$ a
quasi-isomorphism if $fX$ is a quasi-isomorphism for all $X$, i.e., $fX$
induces isomorphism on cohomologies. The derived category $\textup{D}(\cD)$ of $\cD$ is defined to be the localization of the category $\textup{D}_{dg}(\cD)$ with respect to the class of
quasi-isomorphisms. The category $\textup{D}(\cD)$ attains a triangulated structure with the translation induced by the shift of complexes and triangles coming
from short exact sequence of complexes. The Yoneda functor $X\mapsto X^\wedge$ induces an embedding $\H^0(\cD)\to \textup{D}(\cD)$. 

\begin{defn} \label{pretrDG}
The triangulated subcategory of $\textup{D}(\cD)$ generated by the free DG
$\cD$-modules $X^\wedge$ under translations in both directions, extensions and
passage to direct factors is called the {\bf perfect} derived category and
denoted by $\per(\cD)$. Its objects are called perfect modules. A DG category $\cD$ is said to be pretriangulated if the above-mentioned Yoneda functor induces an equivalence $\H^0(\cD)\to \per(\cD)$.
\end{defn}

\begin{rem} \label{repDG}
The homotopy category of a pretriangulated category has a triangulated category structure which is idempotent complete. There is a canonical DG version, denoted by $\per_{dg}(\cD)$, whose homotopy category is $\per(\cD)$. The construction is analogous to that of Example \ref{DGex}. Considering an associative unital algebra $A$ as a DG category one finds that $\per(A)$ is equivalent to the homotopy category of bounded complexes of finitely generated projective modules over $A$. 
\end{rem}

\subsection{The Morita model structure on $\DGcat$} \label{MoritaModel} A DG functor $F:\cC\to \cD$ is called a Morita morphism if it induces an exact equivalence $F^*:\textup{D}(\cD)\to \textup{D}(\cC)$. There are many equivalent formulations of a Morita morphism. The one that we have chosen is perhaps the most direct generalization to the derived setting of (algebraic) Morita morphism as an equivalence of module categories. Thanks to Tabuada \cite{TabThesis} we know that $\DGcat$ has a {\it cofibrantly generated Quillen model category} structure, where the weak equivalences are the Morita morphisms and the fibrant objects are pretriangulated DG categories. The category of noncommutative spaces $\NCS$ is defined to be the localization of $\DGcat$ with respect to the Morita morphisms, i.e., the Morita homotopy category of $\DGcat$. One should bear in mind that a Morita morphism is, in general, weaker than what is called a quasi-equivalence, which generalizes the concept of a quasi-isomorphism. Given any DG category $\A$ one constructs $\per_{dg}(\A)$ as its pretriangulated replacement or fibrant replacement.  Using the fibrant replacement functor it is possible to view $\NCS$ as a full subcategory of $\DGcat$ localized along quasi-equivalences consisting of pretriangulated DG categories. 

The category $\NCS$ has an inner Hom functor which we denote by $\rep_{dg}(-,?)$ \cite{ToeDG}. For the benefit of the reader we recall briefly its construction. Given any DG category $\cA$  one can construct a $\cC_{dg}(k)$-enriched model category structure on the DG category of right DG $\cA$-modules $\textup{D}_{dg}(\cA)$, whose homotopy category turns out to be equivalent to the derived category of $\cA$ \cite{ToeDG}. Let $\Int(\cA)$ denote the category of cofibrant-fibrant objects of this model category, which may be regarded as a $\cC_{dg}(k)$-enrichment of the derived category of $\cA$. If $\cC$ and $\cD$ are DG categories then their inner DG category $\rep_{dg}(\cC,\cD)$ is by definition $\Int(\textup{D}_{dg}(\cD^{\op}\otimes\cC))$. 

A DG functor $\phi:\cC\map\cD$ naturally gives rise to a $\cD^{\op}\otimes\cC$-module $M_\phi$, i.e., $M_\phi(-\otimes ?)=\Hom_\cD(\phi(?),-)$. This is one advantage of working in the DG setting, i.e., every morphism (not necessarily isomorphisms) in $\NCS$ becomes a generalized DG bimodule morphism or a noncommutative correspondence. In the geometric triangulated setting, e.g., when the triangulated category is of the form $\textup{D}^b(\Coh(X))$ for some smooth and proper variety $X$, such a result is true only for exact equivalences \cite{Orl1}.

\subsection{A convenient localization of $\DGcat$} Waldhausen's $\K$-theory construction $\bK$ produces a functor $\DGcat\functor\HoSpt$, where $\HoSpt$ is the (triangulated) homotopy category of spectra. More precisely, given any DG category $\cA$ one constructs a {\it Waldhausen category} structure on the category of perfect right $\cA$-modules with cofibrations as module morphisms which admit a (graded) retraction and weak equivalences as quasi-isomorphisms. Then one applies Waldhausen's machinery \cite{Waldhausen} to this category. The homotopy groups of this spectrum are by definition the Waldausen $\K$-theory groups of the DG category $\cA$. We may perform a localization $\mathcal{L}_{\bK}(\DGcat)$ with respect to the class of morphisms inverted by $\bK$, i.e., morphisms $f$ such that $(\bK(f))$ is a homotopy equivalence of spectra. 

The category $\DGcat$ is {\it combinatorial}, i.e., it is cofibrantly generated model category and its underlying category is locally presentable (see, e.g., \cite{AdaRos} for the generalities). Indeed, it is cofibrantly generated by construction and the underlying category is locally presentable as follows: if $\mathcal{V}$ is a locally presentable symmetric monoidal category then the category of small $\mathcal{V}$-enriched categories is also locally presentable \cite{KelLac} and the category of cochain complexes over $k$ is clearly locally presentable. Now any combinatorial model category is Quillen equivalent to a left proper and simplicial model category \cite{DugLoc}. Therefore we may define $\mathcal{L}_{\bK}(\DGcat)$ as a Bousfield homological localization \cite{BouLoc,BouLocSpa} and avoid set-theoretic problems. This drastic localization has the effect that all DG categories which are `indistinguishable at the level of $\K$-theory spectra up to homotopy' become isomorphic in the homotopy category $\mathtt{Ho}\mathcal{L}_{\bK}(\DGcat)$. The category $\mathtt{Ho}\mathcal{L}_{\bK}(\DGcat)$ enjoys the property that the $\K$-theory functors $\K_i:\DGcat\functor\Ab$ factor through it, where $\Ab$ is the category of abelian groups. 

We call a category {\it semiadditive} if it has a zero object, it has finite products and coproducts, the canonical map from the coproduct to the product (which exists thanks to the zero object) is an isomorphism and it is enriched over discrete commutative monoids.

\begin{lem}
The category  $\mathtt{Ho}\mathcal{L}_{\bK}(\DGcat)$ is semiadditive.
\end{lem}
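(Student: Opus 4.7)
The plan is to establish the three defining properties of semiadditivity in sequence, ultimately inheriting them from an analogous structure on the Morita homotopy category $\Hmo$ by virtue of the fact that every Morita equivalence is a $\bK$-equivalence.

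\textbf{Zero object.} The empty DG category $\emptyset$ is initial in $\DGcat$ and remains initial under localization. For the terminal property, consider the one-object DG category $\mathbf{0}$ with zero endomorphism algebra; it admits a canonical DG functor from any small $\cA$, hence is terminal in $\DGcat$. Any DG module $M$ over $\mathbf{0}$ satisfies $\id_M = F(\id_\ast) = F(0) = 0$, forcing $M = 0$; therefore $\per(\mathbf{0}) = 0$ and $\bK(\mathbf{0}) \simeq 0 \simeq \bK(\emptyset)$. Thus $\mathbf{0}$ is $\bK$-equivalent to $\emptyset$ in $\mathtt{Ho}\mathcal{L}_{\bK}(\DGcat)$, producing the required zero object.

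\textbf{Biproducts.} I claim the disjoint union $\cA \sqcup \cB$ serves simultaneously as coproduct and product. As a colimit, $\sqcup$ descends from $\DGcat$ to the coproduct in the localized homotopy category. For the product property, the key computation is that the monoidal product distributes over disjoint union,
\[
(\cA \sqcup \cB)^{\op} \otimes \cC \simeq (\cA^{\op} \otimes \cC) \sqcup (\cB^{\op} \otimes \cC),
\]
so that right DG modules over the left-hand side split as pairs. This yields a natural equivalence $\rep_{dg}(\cC, \cA \sqcup \cB) \simeq \rep_{dg}(\cC, \cA) \times \rep_{dg}(\cC, \cB)$ already in $\Hmo$, exhibiting $\sqcup$ as a biproduct in the Morita homotopy category (as in the framework of Tabuada \cite{TabThesis}). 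Since Morita equivalences are $\bK$-equivalences, the biproduct inclusions $i_\cA, i_\cB$, projections $p_\cA, p_\cB$, and their defining relations descend along the localization functor $\Hmo \to \mathtt{Ho}\mathcal{L}_{\bK}(\DGcat)$.

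\textbf{Commutative monoid enrichment and main obstacle.} In any category with a zero object and binary biproducts, the Hom sets canonically carry a commutative monoid structure via $f + g = \nabla \circ (f \sqcup g) \circ \Delta$, with the zero morphism as identity and with bilinearity of composition reducing to standard diagram chases; this supplies the desired enrichment. The main obstacle in this plan is the second step: rigorously verifying that the Hom-splitting in $\Hmo$ truly survives the further Bousfield localization at the $\bK$-equivalences, i.e.\ that
\[
\Hom_{\mathtt{Ho}\mathcal{L}_{\bK}(\DGcat)}(\cC, \cA \sqcup \cB) \cong \Hom_{\mathtt{Ho}\mathcal{L}_{\bK}(\DGcat)}(\cC, \cA) \times \Hom_{\mathtt{Ho}\mathcal{L}_{\bK}(\DGcat)}(\cC, \cB).
\]
This should ultimately reduce to the spectrum-level identity $\bK(\cA \sqcup \cB) \simeq \bK(\cA) \vee \bK(\cB) \simeq \bK(\cA) \times \bK(\cB)$ in $\HoSpt$, combined with the fact that finite wedges and finite products coincide in any stable homotopy category, so that no structure is lost by further inverting the $\bK$-equivalences.
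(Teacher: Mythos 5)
Your argument is correct and lands on the same key fact as the paper, but it is organized differently: you do more work than necessary in one place and leave the decisive step as a flagged ``obstacle'' in another. The paper's proof is short: it cites Tabuada for the facts that the homotopy category of $\DGcat$ is pointed with finite products and coproducts (these persist in the localization, being homotopy (co)limits in a model category), and then observes that the canonical map $\cA\coprod\cB\to\cA\times\cB$ is sent by $\bK$ to a homotopy equivalence of spectra because $\bK$ is an \emph{additive invariant} \cite{DugShi,Tab3} --- hence this map is inverted in $\mathtt{Ho}\mathcal{L}_{\bK}(\DGcat)$ by the very definition of the localization; the monoid structure is then the standard $\nabla\circ(f\times g)\circ\Delta$, exactly as in your last step. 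By contrast, you first prove the stronger claim that $\cA\coprod\cB$ is already a biproduct in $\Hmo$ via the splitting of $\rep_{dg}(\cC,\cA\coprod\cB)$; this is plausible (modules over a disjoint union split, and perfectness is checked componentwise) but it is not needed and the paper does not assert it. Conversely, the step you call the ``main obstacle'' is precisely the one the paper disposes of in a single sentence: a morphism whose image under $\bK$ is an equivalence is by construction invertible after the Bousfield localization, so there is nothing further to verify once you note that $\bK$ turns both the finite coproduct and the finite product into the same (co)product of spectra. Promote your ``should ultimately reduce to'' sketch into that one observation and your proof coincides in substance with the paper's.
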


\begin{proof}
It was shown in \cite{TabThesis} that the homotopy category of $\DGcat$ is pointed, i.e, it has a zero object and it has finite products and coproducts. The canonical map from a finite coproduct to the product of DG categories induces an isomorphism between their Waldhausen $\K$-theory spectra, since $\bK$ is an additive invariant \cite{DugShi,Tab3}. Therefore, this map is invertible in $\DGcorr$. The addition (commutative monoid operation) of morphisms $f,g:\cC\map\cD$ is defined by the following composition of arrows

\beqn
\cC\overset{\Delta}{\map}\cC\times\cC\overset{(f,g)}{\map}\cD\times\cD\cong\cD\coprod\cD\overset{\iota}{\map}\cD,
\eeqn where $\Delta$ is the diagonal map and $\iota$ is the fold map, induced by the universal property of the coproduct applied to two copies of the map $\id_A:A\map A$, i.e., it is the dual of the diagonal map. 

\end{proof}

Now we may apply the monoidal group completion functor to the morphism sets (enriched over monoids) of $\mathtt{Ho}\mathcal{L}_{\bK}(\DGcat)$ to obtain certain categories enriched over abelian groups. Since the monoids here are discrete a na\"ive group completion suffices. Products, coproducts and the zero object remain unaffected. Therefore, by construction we end up with an additive category. We define this additive category as our {\it noncommutative correspondence category} in this framework and denote it by $\DGcorr$. There is a canonical functor $\mathtt{Ho}\mathcal{L}_{\bK}(\DGcat)\functor\DGcorr$ which is identity on objects and sends each morphism monoid to its group completion via the canonical map. Since $\Hmo_0$ is the universal additive invariant \cite{Tab3} the functor $\bK$ on $\DGcat$ factors through $\Hmo_0$ and there is a commutative diagram (with additive functors)

\beqn
\xymatrix{
\Hmo_0
\ar[rr]^{\bK}
\ar@{-->}[dr]
&& \HoSpt \\
& \NCC_{\mathtt{dg}}^{\bK}
\ar[ur]_{\bK}}
\eeqn

\begin{rem}
An enriched (over $\HoSpt$) version of $\DGcorr$ can be obtained by performing the topological group completion given by $\Omega\textup{B}(-)$, which is the classifying space functor $\textup{B}$ followed by the loop functor $\Omega$.
\end{rem}

\subsection{Noncommutative $C^*$-correspondence category $\KKcat$}
The category of commutative separable $C^*$-algebras corresponds to that of metrizable topological spaces. Kasparov developed $\KK$-theory by unifying $\K$-theory and $\K$-homology into a bivariant theory and obtained interesting positive instances of the Baum--Connes conjecture \cite{KasKK1,KasKK2}. A remarkable feature of this theory is the existence of an associative Kasparov product on the $\KK$-groups. Higson proposed a categorical point of view of $\KK$-theory making use of the Kasparov product to define compositions \cite{Hig1}. The category of  $C^*$-algebras with morphisms enriched over Kasparov's bivariant $\KK_0$-groups plays the role of the category of noncommutative correspondences in the realm of noncommutative geometry {\it \`a la Connes} (see, for instance, \cite{ConSka,ConConMar1}). We denote this category by $\KKcat$. Morphisms in the $\KK_0$-groups can be expressed as homotopy classes of even Kasparov bimodules. Somewhat miraculously in the end one finds that all the analysis disappears and the morphisms are purely determined by topological data. Let us reiterate that it is not quite clear how geometric an abstract  $C^*$-algebra is. Connes provided a convenient framework of {\it spectral triples} to incorporate geometric structures into the picture \cite{ConDiff}. A promising candidate for the {\it category of spectral triples} has been put forward in \cite{Bram}. The category $\KKcat$ may be  regarded as a convenient model for the operator algebraic noncommutative correspondence category, where most of the well-known geometric examples fit in nicely. There is a canonical functor $\iota:\Csep\functor\KKcat$, where $\Csep$ is the category of  $C^*$-algebras with $*$-homomorphisms. The functor $\iota$ is identity on objects and makes the target of a $*$-homomorphism a bimodule in the obvious manner. Let $\cpt$ denote the algebra of compact operators on a separable Hilbert space. We set $A_\cpt:=A\otimes\cpt$.

\begin{rem} \label{Morita}
There is a counterpart of $\NCS$ in the world of  $C^*$-algebras, which we denote by $\CSp$. For the details we refer the readers to \cite{Mey1}, where it was called the category of {\it correspondences} in the operator algebraic setting. We regard this category as a category of noncommutative spaces where stably isomorphic algebras are identified. For separable $C^*$-algebras being stably isomorphic is equivalent to being Morita--Rieffel equivalent \cite{BGR}. The objects of $\CSp$ are  $C^*$-algebras and a morphism $A\map B$ is an isomorphism class of a right Hilbert $B_\cpt$-module $\mathcal{E}$ with a nondegenerate $*$-homomorphism $f:A_\cpt \map \cpt(\mathcal{E})$. There is a canonical functor $\Csep\functor\CSp$ which is the universal $C^*$-stable functor on $\Csep$ (Proposition 39 {\it loc. cit.}).
\end{rem}

In what follows we shall use $\KK$ (resp. $\K$) and $\KK_0$ (resp. $\K_0$) interchangeably.

\subsection{The passage from $\KKcat$ to $\DGcorr$}

For any  $C^*$-algebra $A$ the mapping $A\map A\otimes\cpt$ sending $a\longmapsto a\otimes \pi$, where $\pi$ is any rank one projection, is called the {\it corner embedding}. This map is clearly nonunital. A functor from $\Csep$ is called {\it $C^*$-stable} if the image of the corner embedding under the functor is an isomorphism. The definition of an {\it exact sequence} in $\Csep$ is simply a diagram isomorphic to $0\map I\map A\map A/I\map 0$, where $I$ is a closed two-sided ideal in $A$. Such a diagram is also known as an {\it extension diagram}. Since we are working with $C^*$-algebras, such extensions are {\it pure}, i.e., an inductive limit of $k$-module split extensions (see Theorem A.4. of \cite{Wod}). It is further called {\it split exact} if it admits a splitting $*$-homomorphism $s: A/I\map A$ (up to an isomorphism). A functor from $\Csep$ to a Quillen {\it exact} category is called {\it split exact} if it sends a split exact sequence of $C^*$-algebras to a distinguished short exact sequence in the target exact category. An abelian (resp. additive) category admits a natural exact structure, where the distinguished exact sequences are the natural short exact sequences (resp. direct sum diagrams). Higson proved that Kasparov's bivariant K-theory is the universal $C^*$-stable and split exact functor from $\Csep$ to an exact category, i.e., given any exact category $\cC$ and a $C^*$-stable and split exact functor $F:\Csep\map\cC$, there is a unique functor $\tilde{F}:\KKcat\functor \cC$ such that $\tilde{F}\circ\iota = F$ \cite{Hig1,Hig2}. Such a functor is automatically homotopy invariant \cite{Hig2}.

In this section we construct a covariant functor $\DGR:\KKcat\functor\DGcorr$. Our strategy would be to show that the functor $\DGR$ is a $C^*$-stable and a split exact functor on $\Csep$ so that we can apply Higson's Theorem to deduce that it factors through the category $\KKcat$.

The category $\CSp$ is the category of $C^*$-algebras with some generalized morphisms, in which Morita--Rieffel equivalent $C^*$-algebras become isomorphic (see Remark \ref{Morita}). A $C^*$-algebra $A$ is called {\it stable} if $A\cong A\otimes\cpt$, e.g., the algebra of compact operators $\cpt$ is itself stable. In $\CSp$ any $C^*$-algebra $A$ is isomorphic to a stable $C^*$-algebra functorially, {\it viz.,} its own stabilization $A_\cpt$. 

Given a split exact diagram in $\Csep$
\beqn
\xymatrix { 0\ar[r]&A\ar[r]^i & B \ar[r]^j& C\ar[r]\ar@/_1pc/[l]_s& 0}
\eeqn there exists a morphism $t:B\map A$ in $\KKcat$, which makes it a direct sum diagram. Since $\KK$-theory is morally the space of morphisms between $\K$-theories, we would like our DG category to be a categorical incarnation of $\K$-theory, even for nonunital algebras. 

Let us briefly recall a construction of Quillen \cite{QuiNonunitalK0}, which turns out to be useful to this end. Given any (possibly nonunital) $k$-algebra $A$, with unitization $\tilde{A}$, we consider the category $\Pr(A)$ whose objects are complexes $U$ of right $\tilde{A}$-modules, which are homotopy equivalent to bounded complexes of finitely generated projective modules over $\tilde{A}$, such that $U/AU$ is acyclic. We enrich the category $\Pr(A)$ over cochain complexes as explained in Example \ref{DGex} to make it a $k$-linear DG category. So the zeroth cocycle category $\Z^0(\Pr(A))$ forms a subcategory of the category of perfect complexes over $\tilde{A}$. Observe that $\Z^0(\Pr(A))$ is also a Waldhausen category with the weak equivalences (resp. cofibrations) pulled back from the associated Waldhausen category structure on the category of perfect right $\Pr(A)$-modules, which are precisely the homotopy equivalences (resp. monomorphisms with a graded splitting). The Grothendieck group of $\Z^0(\Pr(A))$ can be identified with the free abelian group generated by the homotopy classes of its objects and relations coming from short exact sequences of complexes, which are split in each degree, i.e., Waldhausen's $\K_0$-group. 

In general there is an exact functor from $\Z^0(\Pr(A))$ to the Waldhausen category of all perfect complexes over $\tilde{A}$ whose $\K$-theory spectrum is canonically homotopy equivalent to Quillen's algebraic $\K$-theory spectrum (obtained, for instance, by $Q$-construction). This induces a map of spectra $\bK(\Z^0(\Pr(A)))\map\bK^{\textup{alg}}(A)$. Observe that excision holds for algebraic $\K$-theory of $C^*$-algebras \cite{SusWod2} and so it makes sense to talk about the algebraic $\K$-theory spectrum of a nonunital $C^*$-algebra. As a result there is a canonical map at the level of Grothendieck groups  $\K_0(\Pr(A))\map\K_0(A):=\K_0(\tilde{A})/\K_0(k)$, where $\K_0(\tilde{A})$ (resp. $\K_0(k)$) can be identified with the Grothendieck group of stable isomorphism classes of finitely generated projective modules over $\tilde{A}$ (resp. $k)$. This map turns out to be an isomorphism when $A$ is a $C^*$-algebra (Proposition 6.3 in \cite{QuiNonunitalK0}). In order to understand the structure of the Grothendieck group one may work with a simpler subcategory of $\Pr(A)$. Consider the full subcategory of $\Pr(A)$ consisting objects $f:P\map Q$, where $P,Q$ are finitely generated projective modules over $\tilde{A}$, such that the induced map $\bar{f}:P/AP\map Q/AQ$ is an isomorphism. The canonical inclusion of this subcategory inside $\Pr(A)$ induces an isomorphism on their Grothendieck groups, which admits a simple presentation given in Theorem 8.4. {\it ibid.}

Any $*$-homomorphism $g:A\map B$ between possibly nonunital $C^*$-algebras extends uniquely to a unital map (preserving the adjoined unit) $\tilde{g}:\tilde{A}\map\tilde{B}$ between their unitizations. Then one can consider ${B}$ as an $\tilde{A}$-$\tilde{B}$-bimodule (left structure is given by the map $\tilde{g}$ and using the fact that $B$ is a two-sided ideal in $\tilde{B}$), which gives rise to a functor $g_*:= -\otimes_{\tilde{A}} B:\Pr(A)\map\Pr(B)$ that induces a well-defined map on $\K$-theory ({\it c.f.,} Theorem 3.1. {\it ibid.}). Here we take the algebraic tensor product. Therefore, our construction $A\mapsto \Pr(A)$ is functorial with respect to $*$-homomorphisms, i.e., $\Pr:\Csep\map\DGcat$ is a functor.

\begin{lem} \label{KStable}
The functor $\Pr:\Csep\functor\NCS$ is $C^*$-stable.
\end{lem}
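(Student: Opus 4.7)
The goal is to show that the DG functor $\iota_*=-\otimes_{\tilde A}A_\cpt:\Pr(A)\to\Pr(A_\cpt)$, induced by the corner embedding $\iota(a)=a\otimes\pi$ with $\pi\in\cpt$ a rank-one projection, is a Morita morphism of DG categories, i.e., becomes an isomorphism in $\NCS$. Since a Morita morphism is by definition one that induces an equivalence on derived categories, my plan is to reduce to the classical Morita correspondence for finite matrix algebras and then pass to a filtered limit.

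I would first consider the tower of finite corner embeddings $\iota_n:A\to A\otimes M_n(\CC)$, $a\mapsto a\otimes e_{11}$, whose colimit is algebraically dense in $\iota$. For each $n$, the classical Morita correspondence identifies $\tilde A$ and $\widetilde{A\otimes M_n}$ at the level of perfect modules via the bimodule realized as the first column of $M_n(\tilde A)$, viewed as a $\tilde A$-$\widetilde{A\otimes M_n}$-bimodule through the unitized map $\widetilde{\iota_n}$. The acyclicity condition $U\otimes_{\tilde A}k\simeq 0$ is preserved under this equivalence because the two-sided ideals $A\subset\tilde A$ and $A\otimes M_n\subset\widetilde{A\otimes M_n}$ correspond under the bimodule and both quotients are canonically identified with $k$. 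Hence each $(\iota_n)_*:\Pr(A)\to\Pr(A\otimes M_n)$ is a quasi-equivalence.

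Next I would compare $\Pr(A_\cpt)$ with this tower. Although $\widetilde{A_\cpt}$ strictly contains the algebraic union $\varinjlim_n\widetilde{A\otimes M_n}$ (the former employs the $C^*$-tensor product), every projection in a matrix algebra $M_k(\widetilde{A_\cpt})$ can, by the standard $C^*$-algebraic fact that two sufficiently close projections are unitarily conjugate, be conjugated to one lying in $M_k(\widetilde{A\otimes M_n})$ for $n$ large. Since every object of $\Pr(A_\cpt)$ is homotopy equivalent to a bounded complex of modules of the form $p\widetilde{A_\cpt}^k$, and the condition $U/A_\cpt U\simeq 0$ descends to each finite stage, every object of $\Pr(A_\cpt)$ is quasi-isomorphic to one in the image of some $(\iota_n)_*$. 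Combining this with the tower of quasi-equivalences from the previous step delivers the isomorphism $\Pr(A)\cong\Pr(A_\cpt)$ in $\NCS$.

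The main obstacle is this colimit step, which must reconcile the $C^*$-algebraic completion in $A\otimes_{C^*}\cpt$ with the algebraic filtered union of matrix algebras that enters the definition of $\Pr$. The near-projection conjugacy result controls isomorphism classes of finitely generated projective $\widetilde{A_\cpt}$-modules, and the acyclicity condition interacts well with the tower since the ideals $A\otimes M_n$ are nested with dense union in $A_\cpt$.
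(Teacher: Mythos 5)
Your strategy is genuinely different from the paper's, and as written it has a gap at the decisive step. The paper does not factor through finite matrix algebras at all: it invokes the Brown--Green--Rieffel theorem (stably isomorphic separable $C^*$-algebras are Morita--Rieffel equivalent) to produce equivalence bimodules $X$, $X'$ between $A$ and $A_\cpt$, assembles the Morita context $\left(\begin{smallmatrix}\tilde A & X\\ X' & \tilde A_\cpt\end{smallmatrix}\right)$, and applies Quillen's Corollary 3.2, which asserts precisely that such a context induces inverse equivalences between $\Pr(A)$ and $\Pr(A_\cpt)$ in $\NCS$. All of the difficulty of comparing the algebraic union $\varinjlim_n A\otimes\mathbb{M}_n$ with its $C^*$-completion is absorbed into that single citation, with no tower and no limit.

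The gap in your colimit step is that you only address objects. A Morita morphism must induce an equivalence of derived categories, so besides generation you need quasi-full-faithfulness: the map
\[
\Hom_{\Pr(A)}(U,V)\longrightarrow \Hom_{\Pr(A_\cpt)}(\iota_*U,\iota_*V)
\]
must be a quasi-isomorphism for all $U,V$. For bounded complexes of finitely generated projectives the right-hand side is, by adjunction, a base change of the left-hand side along $\tilde A\to\widetilde{A_\cpt}$, and base change does alter cohomology in general (the discrepancy is controlled by $V\otimes_{\tilde A}(A_{\cpt}/A)$); showing it does not for objects satisfying the acyclicity condition $U/AU\simeq 0$ is exactly the trace-ideal argument of Quillen's Theorem 3.1 and Corollary 3.2, which your projection-approximation step does not supply. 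Even for essential surjectivity, conjugating each projection of a complex into $\mathbb{M}_k(\widetilde{A\otimes\mathbb{M}_n})$ does not by itself move the \emph{differentials} --- which live in completed Hom spaces --- into a finite stage; one would need an additional perturbation argument to produce an honest complex there that is homotopy equivalent to the original. A smaller inaccuracy: $\tilde A$ and $\widetilde{A\otimes\mathbb{M}_n}$ are not classically Morita equivalent via the column bimodule, since that bimodule implements the equivalence with $\mathbb{M}_n(\tilde A)$, which strictly contains $\widetilde{A\otimes\mathbb{M}_n}=A\otimes\mathbb{M}_n+\CC\cdot 1$; the correct finite-stage statement is again Quillen's relative one about the nonunital ideals $A$ and $A\otimes\mathbb{M}_n$. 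So even the first step of your argument ultimately needs the same machinery that, applied once with the Brown--Green--Rieffel bimodule, finishes the whole proof.
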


\begin{proof}
It is known that if two separable (more generally $\sigma$-unital) $C^*$-algebras are stably isomorphic then they are Morita--Rieffel equivalent \cite{BGR}. Since $A$ and $A_\cpt$ are stably isomorphic there are, by definition, Morita--Rieffel equivalence bimodules ${}_A X_{A_\cpt}$ and ${}_{A_\cpt} X'_A$, satisfying certain conditions, see e.g., \cite{IndRepRie}. Then $X$ can be made into a left (resp. right) unitary module over the unitization $\tilde{A}$ (resp. $\tilde{A}_\cpt$) of $A$ (resp. $A_\cpt$) by setting $(a,\lambda)x = ax +\lambda x$ for $(a,\lambda)\in\tilde{A}$. Similarly one makes $X'$ into a left $\tilde{A}_\cpt$ and right $\tilde{A}$-module. One can check that $\left(\begin{smallmatrix} \tilde{A} & X \\ X' & \tilde{A}_\cpt\end{smallmatrix}\right)$ defines a Morita context. Now by Corollary 3.2. of \cite{QuiNonunitalK0} one deduces that $X\otimes_A -$ and $X'\otimes_{A_\cpt} -$ induce inverse equivalences between $\Pr(A)$ and $\Pr(A_\cpt)$ in $\NCS$. 
\end{proof}

As a consequence we obtain a functorial construction between noncommutative spaces.

\begin{cor}
There is an induced functor $\Pr:\CSp\functor\NCS$.
\end{cor}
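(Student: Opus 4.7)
The strategy is a direct application of the universal property of $\CSp$ recalled in Remark \ref{Morita}. According to the cited Proposition 39 of \cite{Mey1}, the canonical functor $\iota:\Csep\functor\CSp$ is universal among $C^*$-stable functors out of $\Csep$: any $C^*$-stable functor $F:\Csep\functor\mathcal{D}$ to an arbitrary category $\mathcal{D}$ factors uniquely as $F = \tilde F\circ \iota$ for some functor $\tilde F:\CSp\functor\mathcal{D}$.

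The first step is to view $\Pr:\Csep\functor\DGcat$ as a functor landing in $\NCS$ by post-composing with the localization $\DGcat\functor\NCS$ along Morita morphisms. The second step is to invoke the preceding Lemma \ref{KStable}, which establishes precisely that this composite $\Pr:\Csep\functor\NCS$ is $C^*$-stable, i.e.\ sends every corner embedding $A\hookrightarrow A_\cpt$ to an isomorphism in $\NCS$. The third step is to apply the universal property with $\mathcal{D}=\NCS$ and $F=\Pr$, obtaining a unique factorisation through $\iota$. Denoting the resulting functor $\CSp\functor\NCS$ again by $\Pr$ by slight abuse of notation gives the assertion, together with the commutative triangle relating it to the original $\Pr:\Csep\functor\NCS$.

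No real obstacle is expected, since the universal property cited from \cite{Mey1} imposes no structural hypotheses (such as additivity, exactness, or enrichment) on the target category; in particular it applies verbatim to the non-additive category $\NCS$. The whole content of the corollary is therefore the conjunction of the cited universal property with the $C^*$-stability of $\Pr$ just verified in Lemma \ref{KStable}, and the proof reduces to recording this implication.
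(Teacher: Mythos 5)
Your proposal is correct and is exactly the argument the paper intends: the corollary is stated as an immediate consequence of Lemma \ref{KStable} together with the universal property of $\Csep\functor\CSp$ as the universal $C^*$-stable functor recalled in Remark \ref{Morita}. The paper gives no further detail, so your spelled-out factorisation is the same proof.
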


\begin{rem} \label{excision}
In fact, Theorem 4.2. of \cite{QuiNonunitalK0} asserts that up to a Morita context $\Pr(\pounds)$ is independent of the embedding of $\pounds$ in a unital $C^*$-algebra as a closed two-sided ideal, which ensures that the functor $\bK$ satisfies excision, i.e., whenever $0\map A\map B\map C\map 0$ is an exact sequence of $C^*$-algebras $\bK(\Pr(A))\map\bK(\Pr(B))\map\bK(\Pr(C))$ is a homotopy fibration.
\end{rem}

Note that the category $\NCS$ does not involve any $\K$-theoretic localization. Since the $C^*$-stability property of $\Pr$ is actually achieved in $\NCS$ it is independent of the $\K$-theoretic localization. The localization will be needed now to prove its split exactness. 

\begin{lem} \label{splitExact}
The functor $\Pr:\Csep\functor\DGcorr$ is split exact.
 \end{lem}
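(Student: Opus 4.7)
The plan is to reduce split exactness to the excision property of Waldhausen $\K$-theory established in Remark \ref{excision}. Given a split exact sequence $0 \to A \xrightarrow{i} B \xrightarrow{j} C \to 0$ in $\Csep$ with splitting $s : C \to B$, I would show that the canonical arrow from the coproduct
\[
\varphi : \Pr(A) \coprod \Pr(C) \longrightarrow \Pr(B),
\]
built out of $\Pr(i)$ and $\Pr(s)$ via the universal property, is an isomorphism in $\DGcorr$. By the semiadditivity of $\mathtt{Ho}\mathcal{L}_{\bK}(\DGcat)$ established in the preceding lemma, the coproduct agrees with the biproduct in $\DGcorr$, so such a $\varphi$ automatically exhibits $\Pr(B) \cong \Pr(A) \oplus \Pr(C)$ together with $\Pr(i)$, $\Pr(j)$ and $\Pr(s)$ as a direct sum diagram; this is precisely a distinguished exact sequence in the natural exact structure on the additive category $\DGcorr$.

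To check that $\varphi$ is an isomorphism in $\DGcorr$ it suffices to check that $\bK(\varphi)$ is a homotopy equivalence of spectra, since by construction the Bousfield localization $\mathcal{L}_{\bK}$ inverts exactly such morphisms and the subsequent group completion is identity on objects and preserves isomorphisms. First I would identify $\bK(\Pr(A) \coprod \Pr(C))$ with $\bK(\Pr(A)) \vee \bK(\Pr(C))$, using that $\bK$ is an additive invariant and hence sends the canonical comparison from the coproduct to the product in $\DGcat$ to an equivalence (this is the very fact used in the proof of semiadditivity above). Second, Remark \ref{excision} applied to the given extension yields a homotopy fibration
\[
\bK(\Pr(A)) \longrightarrow \bK(\Pr(B)) \longrightarrow \bK(\Pr(C)),
\]
and the splitting $\bK(\Pr(s))$ trivialises it, giving a homotopy equivalence $\bK(\Pr(A)) \vee \bK(\Pr(C)) \xrightarrow{\sim} \bK(\Pr(B))$ through $\bK(\Pr(i))$ and $\bK(\Pr(s))$. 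Composing these identifications shows $\bK(\varphi)$ is a homotopy equivalence of spectra.

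The main technical point is really concentrated in the excision input: Remark \ref{excision} rests on the delicate Morita-type independence of $\Pr(\pounds)$ with respect to the choice of unitization (Theorem 4.2 of \cite{QuiNonunitalK0}) and on excision for the algebraic $\K$-theory of $C^*$-algebras due to Suslin--Wodzicki \cite{SusWod2}. Once the split fibration of $\K$-theory spectra is granted, everything else is formal: additivity of $\bK$ converts coproducts of DG categories into wedges of spectra, the universal property of $\mathcal{L}_{\bK}$ promotes a $\bK$-equivalence to an isomorphism in $\mathtt{Ho}\mathcal{L}_{\bK}(\DGcat)$, and semiadditivity of $\DGcorr$ upgrades the resulting isomorphism to a full direct sum diagram, from which the retraction onto $\Pr(A)$ is read off as a component of $\varphi^{-1}$.
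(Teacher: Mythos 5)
Your proof is correct and rests on exactly the same key input as the paper's: the excision statement of Remark \ref{excision} together with the fact that $\DGcorr$ inverts morphisms that $\bK$ sends to homotopy equivalences. The only difference is the formal packaging. The paper forms $\ker(J)$ in the Karoubian closure of $\DGcorr$ using the idempotent $SJ$ on $\Pr(B)$, obtains a comparison map $\kappa:\Pr(A)\to\ker(J)$ from the universal property of the kernel, and checks that $\bK(\kappa)$ is an equivalence via the homotopy fibration; you instead compare the coproduct $\Pr(A)\coprod\Pr(C)$ directly with $\Pr(B)$ via the map $\varphi$ assembled from $\Pr(i)$ and $\Pr(s)$. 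Your version sidesteps the passage to the Karoubian closure, at the cost of invoking additivity of $\bK$ on finite coproducts --- but that fact is already used in the paper's semiadditivity lemma, so nothing new is required. Both arguments ultimately reduce to the same statement in stable homotopy theory: a homotopy fibration of spectra admitting a section splits, so $\bK(\Pr(i))$ and $\bK(\Pr(s))$ exhibit $\bK(\Pr(B))$ as the wedge $\bK(\Pr(A))\vee\bK(\Pr(C))$. One small point worth making explicit: to read off $\Pr(j)$ as the projection onto the $\Pr(C)$-summand of the resulting biproduct you need $\Pr(j)\circ\Pr(i)=0$ in $\DGcorr$; this holds because $j\circ i$ is the zero $*$-homomorphism, which factors through the zero algebra, and $\Pr$ of the zero algebra is $\K$-theoretically trivial, hence a zero object after the localization.
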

 
\begin{proof}
For any split exact sequence

\beqn
\xymatrix { 0\ar[r]&A\ar[r]^i & B \ar[r]^j& C\ar[r]\ar@/_1pc/[l]_s& 0},
\eeqn applying $\DGR$ we obtain the diagram

\beqn
\xymatrix { \Pr(A)\ar[r]^{I=i_!} & \Pr(B) \ar[r]^{J=j_!}& \Pr(C)\ar@/_2pc/[l]_{S=s_!}},
\eeqn where $JI=0$ and $JS= \id_{\Pr(C)}$. One can construct $\ker(J)$ in the Karoubian closure of $\DGcorr$, because $SJ:\Pr(B)\map\Pr(B)$ is a projection, and obtain the following diagram

\begin{equation} \label{diagram}
\xymatrix{
&{\Pr(A)}
\ar@{-->}[d]^{\kappa}
\ar[dr]^{I} \\
0
\ar[r]
&\ker(J)
\ar[r]^{\mathcal{I}}
&{\Pr(B)}
\ar[r]^{J}
&{\Pr(C)}
\ar@/_2pc/[l]_{S}
\ar[r]
& 0,
}
\end{equation} where the existence of $\kappa$ follows from the universal properties of $\ker(J)$. Our aim is to show that $\kappa$ is an isomorphism in $\DGcorr$.

\noindent
We may apply Waldhausen's $\K$-theory spectrum functor $\bK$ to the above diagram \eqref{diagram} and by Remark \ref{excision} it follows that $\bK(\kappa)$ is an isomorphism in the homotopy category of spectra, whence $\kappa$ is an isomorphism in $\DGcorr$.

\end{proof}

Now for some technical benefits we modify our functor $\Pr$ slightly. For any $C^*$-algebra $\pounds$ we define $\DGR(\pounds)=\per_{dg}(\Pr(\pounds))$ (see Definition \ref{pretrDG} and the remark thereafter). 

\begin{rem}
For any unital $C^*$-algebra $\pounds$ the category $\DGR(\pounds)$ is our DG model (up to a derived Morita equivalence) for the bounded derived category of finitely generated projective right $\pounds$-modules.
\end{rem}

Observe that by definition the image of $\DGR$ is a $k$-linear pretriangulated DG category. The canonical Yoneda map $\theta_\pounds: \pounds\map\per_{dg}(\pounds)$ is an isomorphism in $\NCS$. One advantage of pretriangulated DG categories is that one can construct cones of morphisms functorially in them. Therefore, our final functor $\DGR$ is a composition of two functors $\per_{dg}\circ\Pr$. Since $\theta_\pounds:\pounds\map\per_{dg}(\pounds)$ is an isomorphism, a map $f:\pounds\map \pounds'$ in $\Csep$ induces an {\it unnatural} map $f_!:\per_{dg}(\Pr(\pounds))\map\per_{dg}(\Pr(\pounds'))$ (in the same direction) by setting $f_! = \theta_{\pounds'}f_*\theta_\pounds^{-1}$. Therefore, $\DGR(\pounds)$ is a covariant functor $\Csep\functor\NCS\functor\DGcorr$.

\begin{rem}
The application of $\per_{dg}$, i.e., taking a fibrant replacement also creates some flexibility to bring in more analysis into the picture. For instance, instead of taking DG functors with values in chain complexes over $k$ one could take functors with values in the category of chain complexes over the quasiabelian category of topological vector spaces \cite{Schn,Pros}. Conceivably one could still prove a result similar to Theorem \ref{MainThm} below, which we leave for the readers to figure out.
\end{rem}

\begin{lem} \label{KStable2}
The functor $\DGR:\Csep\functor\DGcorr$ is $C^*$-stable and split exact.
\end{lem}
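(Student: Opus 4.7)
The plan is to reduce the statement to the two preceding lemmas by observing that the passage from $\Pr(\pounds)$ to $\DGR(\pounds)=\per_{dg}(\Pr(\pounds))$ is invisible in $\DGcorr$. More precisely, for any DG category $\cA$ the Yoneda embedding $\theta_\cA:\cA\map\per_{dg}(\cA)$ is a Morita equivalence, hence a weak equivalence in the Morita model structure on $\DGcat$, and therefore an isomorphism in $\NCS$. Since Waldhausen's $\K$-theory $\bK$ is an additive (in particular, Morita) invariant, $\theta_\cA$ is also inverted by $\bK$ and thus becomes an isomorphism in $\mathtt{Ho}\mathcal{L}_{\bK}(\DGcat)$, and hence in $\DGcorr$ after group completion. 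First I would record that $\theta$ therefore provides a natural isomorphism of functors $\Pr\Rightarrow\DGR$ from $\Csep$ to $\DGcorr$, using the definition $f_!=\theta_{\pounds'}f_*\theta_\pounds^{-1}$ given just before the statement.

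Next I would deduce $C^*$-stability. The corner embedding $c:\pounds\map\pounds_\cpt$ fits into the commutative square (in $\DGcorr$)
\[
\xymatrix{
\Pr(\pounds)\ar[r]^{c_*}\ar[d]_{\theta_\pounds}^\cong & \Pr(\pounds_\cpt)\ar[d]^{\theta_{\pounds_\cpt}}_\cong \\
\DGR(\pounds)\ar[r]^{c_!} & \DGR(\pounds_\cpt).
}
\]
By Lemma \ref{KStable} the top horizontal arrow $c_*$ is an isomorphism already in $\NCS$, hence in $\DGcorr$. The vertical arrows are isomorphisms by the first paragraph, so $c_!$ is an isomorphism in $\DGcorr$. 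This gives the $C^*$-stability of $\DGR$.

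For split exactness, I would apply $\DGR$ to a split exact sequence $0\map A\map B\map C\map 0$ in $\Csep$ and construct the analogue of diagram \eqref{diagram} with $\Pr$ replaced by $\DGR$. Again the Yoneda natural isomorphism $\theta$ produces a commutative ladder between the $\Pr$-diagram and the $\DGR$-diagram, and a comparison map $\tilde\kappa:\DGR(A)\map\ker(\DGR(B)\map\DGR(C))$ which under the ladder corresponds, up to the Yoneda isomorphisms, to the map $\kappa$ of Lemma \ref{splitExact}. Since $\kappa$ is an isomorphism in $\DGcorr$ and the ladder consists of isomorphisms in $\DGcorr$ on the outside columns, $\tilde\kappa$ is an isomorphism as well, establishing split exactness of $\DGR$.

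I do not anticipate a genuine obstacle here, since the whole point of the construction of $\DGR$ is that the fibrant replacement $\per_{dg}$ is harmless at the level of $\DGcorr$; the one technical point to verify carefully is that the naturality of $\theta$ persists after the $\bK$-localization and monoidal group completion, but this follows formally from the 2-functoriality of both operations. The substantive content has already been absorbed in Lemmas \ref{KStable} and \ref{splitExact}.
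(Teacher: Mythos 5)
Your argument is correct and is essentially the paper's own proof, just written out in more detail: the paper likewise reduces Lemma \ref{KStable2} to Lemmas \ref{KStable} and \ref{splitExact} by observing that $\per_{dg}(\Pr(\pounds))$ is a fibrant replacement of $\Pr(\pounds)$ in the Morita model structure, so that $\theta$ is invisible in $\NCS$ and hence in $\DGcorr$. Your explicit commutative squares and the naturality check for $\theta$ after localization are exactly the details the paper leaves implicit.
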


\begin{proof}
The assertions follow from Lemma \ref{KStable} and Lemma \ref{splitExact} since $\per_{dg}(\pounds)$ is simply a fibrant replacement of $\pounds$ in the Morita model category $\DGcat$.
\end{proof}

\begin{lem} \label{Htpy}
The functor $\DGR:\Csep\functor\DGcorr$ is homotopy invariant.
\end{lem}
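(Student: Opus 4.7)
The plan is to invoke Higson's theorem on the universality of $\KK$-theory, which is precisely the mechanism that makes all the preceding lemmas pay off. Recall that Higson proved that any $C^*$-stable and split exact functor from $\Csep$ to a Quillen exact category is automatically homotopy invariant \cite{Hig2}. So the task reduces to packaging $\DGR$ as such a functor into an appropriate exact category.

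First I would observe that $\DGcorr$ carries a canonical exact structure. By construction $\DGcorr$ is additive (the group completion of the semiadditive category $\mathtt{Ho}\mathcal{L}_{\bK}(\DGcat)$), and any additive category admits the split exact structure in which the distinguished short exact sequences are exactly the direct sum diagrams. This is the exact structure we will work with. Next I would simply combine Lemma \ref{KStable2} (which gives $C^*$-stability and split exactness of $\DGR$ simultaneously) with Higson's theorem to conclude homotopy invariance. Explicitly, Lemma \ref{KStable2} together with Higson's universal property yields a unique factorization
\begin{equation*}
\xymatrix{
\Csep \ar[rr]^{\DGR} \ar[dr]_{\iota} && \DGcorr \\
& \KKcat \ar[ur]_{\widetilde{\DGR}} &
}
\end{equation*}
and since $\iota$ identifies homotopic $*$-homomorphisms in $\KKcat$ (they represent the same class in $\KK_0$), it follows that $\DGR = \widetilde{\DGR}\circ\iota$ is homotopy invariant.

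The step that requires the tiniest bit of care is verifying the target category hypothesis in Higson's theorem: strictly speaking Higson formulates universality against an arbitrary exact category in the sense of Quillen, and we need $\DGcorr$ (with its split-exact structure coming from direct sum diagrams) to qualify. Since $\DGcorr$ is additive by construction, the split exact structure on it is automatically a Quillen exact structure, so there is nothing to prove here beyond unwinding definitions. No obstacle should arise in this step, and the substantive content of the lemma is already established in Lemmas \ref{KStable}, \ref{splitExact}, and \ref{KStable2}; homotopy invariance is a formal consequence.
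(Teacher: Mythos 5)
Your proposal is correct and follows essentially the same route as the paper: the proof there simply cites Higson's theorem (Theorem 3.2.2 of \cite{Hig2}) that any $C^*$-stable and split exact functor on $\Csep$ is automatically homotopy invariant, and combines it with Lemma \ref{KStable2}. Your additional remarks about the split exact structure on the additive category $\DGcorr$ and the factorization through $\KKcat$ are harmless elaborations of the same argument (though the factorization is really the content of the later Theorem \ref{MainThm}, so the direct appeal to automatic homotopy invariance is the cleaner way to state it here).
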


\begin{proof}
This is an immediate consequence of Theorem 3.2.2. of \cite{Hig2} which says that any $C^*$-stable and split exact functor on $\Csep$ is automatically homotopy invariant.
\end{proof}

Now we prove a Proposition which shows that our functor $\DGR$ encodes topological $\K$-theory. The proof is modelled along the lines of {\it ibid.}.

\begin{prop} \label{KSpec}
Let $A$ be any $C^*$-algebra. Then $\bK(\DGR(A))$ is homotopy equivalent to the connective cover $\bK^{\textup{top}}(A)\langle 0\rangle$ of the topological $\K$-theory spectrum.
\end{prop}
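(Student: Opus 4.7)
The plan is to identify $\bK\circ\DGR$ with the connective cover of topological $\K$-theory by combining Quillen's analysis of $\Pr(A)$ with the Suslin--Wodzicki comparison theorem. The starting observation is that Lemmas \ref{KStable2} and \ref{Htpy}, together with Remark \ref{excision}, already show that $\bK\circ\DGR \colon \Csep \to \HoSpt$ is $C^*$-stable, split exact, homotopy invariant, and satisfies excision (here one uses that $\bK$, being an additive invariant, turns the cofibre sequences of Remark \ref{excision} into fibre sequences of spectra).

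The first main step is to identify $\bK(\DGR(A))$ with the connective algebraic $\K$-theory spectrum $\bK^{\textup{alg}}(A)$. Since $\per_{dg}$ is a fibrant replacement in the Morita model structure and Waldhausen $\K$-theory is invariant under Morita equivalences of DG categories, one has $\bK(\DGR(A)) \simeq \bK(\Pr(A))$. For a unital $C^*$-algebra $A$, the unitization decomposes as $\tilde A \cong A\times k$, and the requirement that $U/AU$ be acyclic isolates precisely the $A$-component, so $\Z^0(\Pr(A))$ is Waldhausen-equivalent to the category of bounded complexes of finitely generated projective $A$-modules, whose $\bK$-theory is $\bK^{\textup{alg}}(A)$. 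For nonunital $A$, Theorem 4.2 of \cite{QuiNonunitalK0} combined with excision (Remark \ref{excision}) realises $\bK(\Pr(A))$ as the homotopy fibre of $\bK^{\textup{alg}}(\tilde A)\to\bK^{\textup{alg}}(k)$, i.e.\ the standard definition of $\bK^{\textup{alg}}(A)$ for nonunital $C^*$-algebras.

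The second main step exploits $C^*$-stability to reduce to the stable case: by Lemma \ref{KStable2}, $\bK(\DGR(A)) \simeq \bK(\DGR(A_\cpt)) \simeq \bK^{\textup{alg}}(A_\cpt)$. The Suslin--Wodzicki comparison theorem for stable $C^*$-algebras then provides a natural equivalence $\bK^{\textup{alg}}(A_\cpt)\simeq \bK^{\textup{top}}(A_\cpt)\langle 0\rangle$ in nonnegative degrees, and the $C^*$-stability of topological $\K$-theory yields $\bK^{\textup{top}}(A_\cpt)\langle 0\rangle \simeq \bK^{\textup{top}}(A)\langle 0\rangle$. Composing this chain of natural equivalences establishes the proposition.

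The main obstacle I anticipate is the spectrum-level identification $\bK(\Pr(A))\simeq \bK^{\textup{alg}}(A)$, since Proposition 6.3 of \cite{QuiNonunitalK0} directly addresses only $\K_0$. Lifting it to an equivalence of connective spectra requires checking that the Waldhausen structure on $\Z^0(\Pr(A))$ (cofibrations with graded retractions, quasi-isomorphisms as weak equivalences) matches the standard Waldhausen structure on perfect complexes over $\tilde A$, using the Morita context freedom of Theorem 4.2 {\it ibid.} A secondary care point is that the chain of equivalences should be compatible with the canonical natural transformation $\bK^{\textup{alg}}(-)\langle 0\rangle \to \bK^{\textup{top}}(-)\langle 0\rangle$, so that the identification is genuinely functorial in $A$ rather than merely an abstract isomorphism of spectra.
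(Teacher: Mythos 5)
Your proposal is correct in outline and ends exactly where the paper does (Suslin--Wodzicki for stable $C^*$-algebras, then $C^*$-stability of topological $\K$-theory), but the middle step --- identifying $\bK(\Pr(A_\cpt))$ with $\bK^{\textup{alg}}(A_\cpt)$ --- is handled by a genuinely different argument. You attack the obstacle you flag head-on: a categorical identification of $\Z^0(\Pr(A))$ with perfect complexes, via the splitting $\tilde A\cong A\times k$ in the unital case and the fibration $\bK(\Pr(A))\to\bK(\Pr(\tilde A))\to\bK(\Pr(k))$ of Remark \ref{excision} in the nonunital case. The paper sidesteps this entirely: it uses Quillen's Proposition 6.3 only as a statement about $\pi_0$ of the canonical comparison map $\bK(\Pr(\pounds))\to\bK^{\textup{alg}}(\pounds)$, and then bootstraps to all higher homotopy groups by dimension shifting along $0\to\Sigma A_\cpt\to\Cone A_\cpt\to A_\cpt\to 0$: excision gives compatible long exact sequences for both theories, homotopy invariance kills the cone terms, so the boundary maps $\K_{i+1}(A_\cpt)\to\K_i(\Sigma A_\cpt)$ are isomorphisms on both sides, and since $\Sigma A_\cpt$ is again a (stable) $C^*$-algebra the $\K_0$-isomorphism applies to it as well; induction then gives $\K_i^{\dg}(A_\cpt)\cong\K_i^{\textup{alg}}(A_\cpt)$ for all $i\geq 0$, hence a weak equivalence of connective spectra. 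What your route buys is a spectrum-level statement valid for every $A$ before stabilizing and a more transparent picture of what $\Pr(A)$ is; what it costs is exactly the delicate Waldhausen-category comparison you identify as the main obstacle (plus the small check that the fibre of $\bK^{\textup{alg}}(\tilde A)\to\bK^{\textup{alg}}(k)$ is connective, which holds because that map is split). The paper's route needs only the $\pi_0$ input and lets excision and homotopy invariance --- already established for $\bK\circ\Pr$ --- do the rest; both arguments lean equally on Remark \ref{excision}, so neither is more economical in its hypotheses.
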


\begin{proof}
By Lemma \ref{KStable2} we may replace $A$ by $A_\cpt$ and use the fact that $\Pr(\pounds)\cong\DGR(\pounds)$ in $\NCS$. For the benefit of the reader we now recall a standard dimension shifting argument for stable $C^*$-algebras using the exact sequence $0\map\C_0((0,1))\otimes A_\cpt\map\C_0([0,1))\otimes A_\cpt\map A_\cpt\map 0$.
 
As discussed above there is a map of spectra $\bK(\Pr(A_\cpt))\map\bK^{\textup{alg}}(A_\cpt)$ which induces an isomorphism at the level of $\K_0$. Using Remark \ref{excision} we obtain the following map of exact sequences [set $\K^{\dg}_i(\pounds)=\pi_i(\bK(\Pr(\pounds)))$, $\C_0([0,1))\otimes A_\cpt) =\Cone A_\cpt$ and $\C_0((0,1))\otimes\pounds = \Sigma\pounds$]

\beqn
\xymatrix{
\K_1^{dg}(\Cone A_\cpt)
\ar[r]
\ar[d]
& \K_1^{\dg}(A_\cpt)
\ar[d]
\ar[r]
&\K_0^{\dg}(\Sigma A_\cpt)
\ar[d]
\ar[r]
& \K_0^{dg}(\Cone A_\cpt)
\ar[d]
\ar[r]
&\K_0^{\dg}(A_\cpt)
\ar[d] \\
 \K_1^{\textup{alg}}(\Cone A_\cpt) 
 \ar[r]
 &\K^{\textup{alg}}_1(A_\cpt)
 \ar[r]
 &\K^{\textup{alg}}_0(\Sigma A_\cpt)
 \ar[r]
 & \K_0^{\textup{alg}}(\Cone A_\cpt)
 \ar[r]
& \K^{\textup{alg}}_0(A_\cpt)\; .
 }
\eeqn We know that the three vertical arrows from the right are isomorphisms. Now we exploit the homotopy invariance of $\K_i^{dg}$ and $\K_i^{\textup{alg}}$ and the fact that $\Cone A_\cpt$ is contractible (see, e.g., Theorem 4.2.7. of \cite{Hig2}) to deduce that $\K_i^{dg}(\Cone A_\cpt)=\K_i^{\textup{alg}}(\Cone A_\cpt)=0$, whence the boundary maps $\K_1^{\textup{alg}}(A_\cpt)\map\K_0^{\textup{alg}}(\Sigma A_\cpt)$ and $\K_1^{\textup{dg}}(A_\cpt)\map\K_0^{\textup{dg}}(\Sigma A_\cpt)$ are isomorphisms. 
It follows immediately that $\K_1^{\dg}(A_\cpt)\map\K_1^{\textup{alg}}(A_\cpt)$ is an isomorphism. The isomorphisms $\K_i^\dg(A_\cpt)\map\K_i^{\textup{alg}}(A_\cpt)$ for $i\geqslant 2$ follow easily by induction.

Thanks to the Theorem of Suslin--Wodzicki \cite{SusWod2} we know that the algebraic $\K$-theory spectrum is (connectively) homotopy equivalent to the topological $\K$-theory spectrum of a stable $C^*$-algebra (see also, e.g., Theorem 1.4. of \cite{RosAlgKOperAlg}). Since $A_\cpt$ is stable, $\bK(\Pr(A_\cpt))$ is actually homotopy equivalent to the connective cover of the topological $\K$-theory spectrum of $A_\cpt$, which in turn is homotopy equivalent to $\bK^{\textup{top}}(A)\langle 0\rangle$.
\end{proof}

Now we state the main Theorem in this section.
 
 \begin{thm} \label{MainThm}
 The functor $\DGR$ factors through $\KKcat$; in other words, we have the following commutative diagram of functors: 
 \beqn
 \xymatrix{ 
\Csep
\ar[rr]^{\DGR}
\ar[dr]_{\iota}
&& \DGcorr\; .\\
& \KKcat
\ar@{-->}[ur]_{\DGR}}
\eeqn
 \end{thm}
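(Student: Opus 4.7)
The plan is to invoke Higson's universal property of $\KKcat$ directly: the substantive work has already been carried out in the preceding lemmas, and what remains is to assemble the ingredients and check that $\DGcorr$ is a valid target category for Higson's theorem.

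First I would record that $\DGcorr$ is additive by construction. Indeed, it is obtained from the semiadditive category $\mathtt{Ho}\mathcal{L}_{\bK}(\DGcat)$ by applying the na\"ive monoidal group completion hom-monoid by hom-monoid, a procedure which preserves products, coproducts and the zero object while upgrading the enrichment from discrete commutative monoids to abelian groups. Any additive category carries a canonical exact structure in which the distinguished short exact sequences are the split direct sum diagrams, and this is precisely the type of exact category to which Higson's universal property applies.

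Next I would combine this observation with Lemma \ref{KStable2}: the composed functor $\DGR = \per_{dg} \circ \Pr$ is both $C^*$-stable and split exact as a functor $\Csep \functor \DGcorr$. In the proof of Lemma \ref{splitExact} the map $\kappa$ produced by the universal property of $\ker(J)$ is shown, via Waldhausen's $\K$-theory excision for the functor $\Pr$ (see Remark \ref{excision}), to be inverted by $\bK$ and hence already an isomorphism in $\DGcorr$. Therefore every split exact sequence in $\Csep$ is sent by $\DGR$ to a diagram that is a direct sum decomposition in $\DGcorr$, which is exactly the data of a distinguished short exact sequence in the canonical additive exact structure.

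Finally, invoking Higson's theorem (\cite{Hig1, Hig2}), the $C^*$-stable and split exact functor $\DGR : \Csep \functor \DGcorr$ extends uniquely to an additive functor $\widetilde{\DGR} : \KKcat \functor \DGcorr$ satisfying $\widetilde{\DGR} \circ \iota = \DGR$, yielding the required commutative triangle. The homotopy invariance recorded in Lemma \ref{Htpy} is then automatic (and in any case follows from the factorization through $\KKcat$, which inverts homotopies). The step I would scrutinize most carefully is that the direct sum diagrams supplied by Lemma \ref{splitExact} live genuinely in $\DGcorr$ rather than merely in its Karoubian closure; the argument above shows that this is indeed the case, since $\kappa$ becomes an isomorphism in $\DGcorr$ before any idempotent completion is performed, so no enlargement of the target is needed and Higson's theorem applies verbatim.
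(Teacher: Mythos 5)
Your proposal is correct and follows essentially the same route as the paper: the proof there simply cites Lemma \ref{KStable2} and Lemma \ref{splitExact} and invokes Higson's characterization of $\KK$ as the universal $C^*$-stable and split exact functor on $\Csep$. Your additional remarks --- that $\DGcorr$ carries the canonical additive exact structure required as a target for Higson's theorem, and that the direct sum decomposition from Lemma \ref{splitExact} already lives in $\DGcorr$ because $\kappa$ is inverted there before any idempotent completion --- are accurate elaborations of points the paper leaves implicit rather than a different argument.
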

 
 \begin{proof}
 We have already checked that the functor $\DGR$ is $C^*$-stable (Lemma \ref{KStable2}) and split exact (Lemma \ref{splitExact}). It remains to apply Higson's characterization of $\KK$ as the universal $C^*$-stable and split exact functor on $\Csep$ \cite{Hig1,Hig2}.
 \end{proof}

\begin{cor}
An isomorphism in $\KKcat$ implies an isomorphism in $\DGcorr$. In other words, $\KK$-equivalence implies (correspondence like) derived DG Morita equivalence up to a $\K$-theoretic identification, or Morita-$\K$ equivalence, for brevity. It is known that two unital rings with equivalent derived categories of modules have isomorphic (algebraic) $\K$-theories \cite{DugShi}. We have the following sequence of implications for  $C^*$-algebras

\beqn
\text{Morita--Rieffel equiv.}\Rightarrow \text{$\KK$-equiv.}\Rightarrow\text{Morita-$\K$ equiv.}\Rightarrow\text{isom. top. $\K$-theories}.
\eeqn
 \end{cor}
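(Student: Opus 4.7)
The plan is to assemble three separate implications, each of which is essentially a direct consequence of a result already established (or cited) in the text.

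First, for the central implication \emph{$\KK$-equivalence $\Rightarrow$ Morita-$\K$ equivalence}, I would argue purely by functoriality. Theorem \ref{MainThm} provides a (covariant) functor $\DGR:\KKcat\functor\DGcorr$. Any functor sends isomorphisms to isomorphisms, so if $A\cong B$ in $\KKcat$ then $\DGR(A)\cong\DGR(B)$ in $\DGcorr$. Since $\DGR(A)$ is a pretriangulated DG model for the bounded derived category of finitely generated projective modules over $A$ (Remark after Lemma \ref{splitExact}), the resulting isomorphism in $\DGcorr$ is exactly what we have baptised \emph{Morita-$\K$ equivalence}.

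Next, for \emph{Morita--Rieffel equivalence $\Rightarrow$ $\KK$-equivalence}, I would invoke the standard fact that, for separable $C^*$-algebras, Morita--Rieffel equivalence is the same as stable isomorphism \cite{BGR}, together with $C^*$-stability of the canonical functor $\iota:\Csep\functor\KKcat$. Concretely, if $A$ and $B$ are Morita--Rieffel equivalent then $A_\cpt\cong B_\cpt$ as $C^*$-algebras, and since the corner embeddings $A\map A_\cpt$ and $B\map B_\cpt$ become isomorphisms in $\KKcat$, we obtain $A\cong B$ in $\KKcat$.

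Finally, for \emph{Morita-$\K$ equivalence $\Rightarrow$ isomorphic topological $\K$-theories}, I would combine Proposition \ref{KSpec} with the fact that $\bK$ factors through $\DGcorr$ by construction (this factorisation is precisely the reason we performed the Bousfield localisation $\mathcal{L}_{\bK}$ and subsequent group completion that defines $\DGcorr$). Thus an isomorphism $\DGR(A)\cong\DGR(B)$ in $\DGcorr$ yields a homotopy equivalence $\bK(\DGR(A))\simeq\bK(\DGR(B))$, and Proposition \ref{KSpec} identifies both sides with the connective covers of $\bK^{\textup{top}}(A)$ and $\bK^{\textup{top}}(B)$ respectively. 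Since Bott periodicity determines the non-connective topological $\K$-theory from its connective cover, we recover an isomorphism on all of topological $\K$-theory.

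There is no real obstacle here: all three links are immediate once one trusts the machinery already assembled. If anything, the mildly delicate step is the last one, where one must be careful that the $\K$-theoretic localisation built into $\DGcorr$ is compatible with the connective-cover identification of Proposition \ref{KSpec}; but this compatibility was arranged by design, since morphisms inverted in $\DGcorr$ are precisely those inducing homotopy equivalences on $\bK$.
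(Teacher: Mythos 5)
Your proposal is correct and matches the paper's intended (and essentially unstated) justification: the corollary is an immediate consequence of functoriality of $\DGR$ from Theorem \ref{MainThm}, $C^*$-stability of $\KK$ together with the Morita--Rieffel/stable-isomorphism equivalence of \cite{BGR}, and the factorisation of $\bK$ through $\DGcorr$ combined with Proposition \ref{KSpec} and Bott periodicity. Your explicit assembly of the three implications, including the remark that the connective cover determines the full topological $\K$-theory groups via periodicity, is exactly the argument the paper leaves implicit.
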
 
 
 \begin{rem}
 Any $C^*$-stable and split exact functor on $\Csep$ satisfies Bott periodicity \cite{Hig2} (see also \cite{CunMeyRos}). Hence the functor $\DGR$ will also have this property.
\end{rem}
 
 It is useful to know that the functor $\DGR:\KKcat\functor\DGcorr$ exists by abstract reasoning. However, in order to make the situation a bit more transparent we make use of a rather algebraic formulation of $\KK$-theory due to Cuntz \cite{CunKK}. For any  $C^*$-algebra $A$ let $A\ast A$ denote the free product (which is the coproduct in $\Csep$) of two copies of $A$ and let $qA$ be the kernel of the fold map $A\ast A\map A$. Cuntz showed that $A$ (resp. $B$) is isomorphic to $qA$ (resp. $B\otimes\cpt$) in $\KKcat$ and $\KK(A,B)\cong[qA,B\otimes\cpt]$, i.e., homotopy classes of $*$-homomorphisms $qA\map B\otimes \cpt$. Roughly, the algebra $qA$ is expected to play the role of a cofibrant replacement of $A$ and $B\otimes \cpt$ that of a fibrant replacement of $B$ with respect to some model structure with $\KK_*$-equivalences as weak equivalences. This goal has been accomplished to some extent in \cite{JoaJoh}. The authors of {\it ibid.} work with the larger category of {\it $\nu$-complete l.m.c-$C^*$-algebras} and in their cofibrantly generated $\textup{KK}$-model structure, i.e., the model structure in which the weak equivalences are $\textup{KK}_*$-equivalences, all objects are fibrant and a minor modification of $qA$ acts as a cofibrant replacement.
 
 One benefit of this approach is that Kasparov's product can be viewed simply as a composition of $*$-homomorphisms, which is quite often easier to deal with. In order to define the abelian group structure one proceeds roughly as follows: for any $\phi, \psi\in [qA,B\otimes\cpt]$ one defines $\phi\oplus\psi : qA \map \mathbb{M}_2(B\otimes\cpt)$ as 
 $\left( \begin{smallmatrix}
 \phi & 0 \\
 0    &  \psi
 \end{smallmatrix}
 \right)$. Then one argues that $\mathbb{M}_2(B\otimes\cpt)$ is isomorphic to $B\otimes\cpt$ in $\KKcat$ and fixing such an isomorphism $\theta$ one sets $\phi + \psi := \theta(\phi\oplus\psi)$.

 \begin{prop} 
 The functor $\DGR:\KKcat\functor\DGcorr$ is additive.
 \end{prop}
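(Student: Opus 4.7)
The plan is to verify the standard criterion for additivity of a functor between additive categories: that $\DGR$ preserves finite biproducts. Once biproduct preservation is established, additivity on hom-sets is automatic, since in any additive category the abelian group structure on a morphism set is uniquely determined by the biproduct structure together with the diagonal and fold maps, precisely as in the semiadditivity argument proven earlier in this section.

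First I would observe that biproducts in $\KKcat$ are realized by $C^*$-algebraic direct sums. The split exact sequence
\[
0 \to A \to A\oplus B \to B \to 0
\]
in $\Csep$ becomes a biproduct diagram $A\oplus B$ in $\KKcat$ via Higson's universal property applied to the canonical $C^*$-stable and split exact functor $\iota:\Csep\to\KKcat$. Now apply $\DGR:\Csep\to\DGcorr$ to the same sequence. By Lemma \ref{splitExact}, this yields a split short exact (hence biproduct) diagram
\[
\DGR(A) \to \DGR(A\oplus B) \to \DGR(B)
\]
in $\DGcorr$, with splitting induced by $\DGR$ applied to the original splitting. Therefore $\DGR(A\oplus B) \cong \DGR(A) \oplus \DGR(B)$ canonically, and by Theorem \ref{MainThm} the induced functor $\DGR:\KKcat\to\DGcorr$ preserves finite biproducts. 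This suffices to conclude additivity.

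As a cross-check, one could instead verify additivity directly from Cuntz's description recalled just above the statement: for $*$-homomorphisms $\phi,\psi:qA\to B\otimes\cpt$, the sum $\phi+\psi = \theta\circ(\phi\oplus\psi)$ involves the matrix enlargement $\mathbb{M}_2(B\otimes\cpt)$ and the $\KK$-isomorphism $\theta$ induced by a corner embedding. Since $\DGR$ is $C^*$-stable (Lemma \ref{KStable2}), $\DGR(\theta)$ is an isomorphism in $\DGcorr$, and one would unwind the semiadditive structure on $\mathtt{Ho}\mathcal{L}_{\bK}(\DGcat)$ (via diagonal, biproduct and fold maps, followed by monoidal group completion) to recover $\DGR(\phi+\psi)=\DGR(\phi)+\DGR(\psi)$. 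The only point that would genuinely require care here is matching the Cuntz matrix-stabilized addition on $\KK$-groups with the intrinsic biproduct-induced addition on $\DGcorr$; this is the main potential obstacle, but it is precisely what the biproduct-preservation argument above bypasses, making the proof essentially a direct appeal to Lemma \ref{splitExact}.
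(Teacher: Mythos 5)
Your proof is correct, and it reaches the conclusion by a slightly different route than the paper. Both arguments ultimately reduce additivity to the split exactness of $\DGR$ on $\Csep$ (Lemma \ref{splitExact}), but they differ in how they handle direct sum diagrams in $\KKcat$. The paper's proof starts from an \emph{arbitrary} direct sum diagram in $\KKcat$, whose structure maps are general $\KK$-classes rather than $*$-homomorphisms, and uses Cuntz's $q(-)$ and $-\otimes\cpt$ replacements to rewrite it as a split exact diagram of honest $*$-homomorphisms before applying split exactness. You instead observe that it suffices to check preservation of \emph{one} biproduct diagram per pair of objects, and that the canonical choice --- the $C^*$-direct sum $0\to A\to A\oplus B\to B\to 0$ with its evident inclusions and projections --- is already a split exact sequence in $\Csep$, so Lemma \ref{splitExact} applies directly with no need for the $qA$-picture. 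The remaining step, that a biproduct-preserving functor between additive categories is automatically additive on hom-sets (because in an $\Ab$-enriched category with biproducts the addition is necessarily $\nabla\circ(f\oplus g)\circ\Delta$), is standard and correctly dispatches the worry you flag about reconciling Cuntz's matrix-stabilized addition on $\KK_0$ with the biproduct-induced addition on $\DGcorr$: whatever its concrete description, the $\KK_0$ addition must coincide with the canonical one since $\KKcat$ is additive with biproducts given by $C^*$-direct sums. Your version is arguably cleaner; what the paper's version buys is an explicit recipe for presenting any $\KK$-theoretic direct sum diagram by $*$-homomorphisms, which is of independent interest given the role the Cuntz picture plays in the surrounding discussion.
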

 
 \begin{proof}
 We simply use the fact that any direct sum diagram in $\KKcat$ can be expressed as a split exact diagram involving only $*$-homomorphisms applying $q(-)$ and ${-}\otimes\cpt$ several times, both of which produce isomorphic objects in $\KKcat$, and  then apply the split exactness property of $\DGR$.

\end{proof}

 \begin{ex} \label{TorusEx}
Let $A= \C(E)$ be the $C^*$-algebra of continuous functions on a complex elliptic curve $E$. Topologically $E$ is isomorphic to a $2$-torus $\mathbb{T}^2$. It is known that $\K(A)$ is isomorphic to $\ZZ^2$. Using the fact that $A$ belongs to the Universal Coefficient Theorem class one computes $\KK(A,A)\simeq \textup{M}(2,\ZZ)$. 
 
The group of invertible elements can be identified with $\GL(2,\ZZ)$. Let $\textup{D}^b(E)$ be the bounded derived category of coherent sheaves on $E$. Since all autoequivalences of $\textup{D}^b(E)$ are geometric in nature (see, e.g., Theorem 3.2.2. of \cite{Orl1}), they definitely give rise to automorphisms of $\DGR(A)$; in other words, there is a group homomorphism $\Aut(D^b(E))\map\Aut(\DGR(A))$.  The automorphism group $\Aut(\textup{D}^b(E))$ can be described explicitly (see, e.g., Remark 5.13. (iv) \cite{BurKre}). It maps surjectively onto $\SL(2,\ZZ)$ with a non-canonical splitting defined by sending the generators of $\SL(2,\ZZ)$ to some specific Seidel--Thomas twist functors.

 
It is clear that the group $\Aut(\textup{D}^b(E))$ is bigger than $\GL(2,\ZZ)$ since it contains $\textup{Pic}^0(E)$ as a subgroup. The $\SL(2,\ZZ)$ part of $\Aut(\textup{D}^b(E))$ can be described by the Seidel--Thomas twist functors, which can also be seen as Fourier--Mukai transforms \cite{SeiTho} and it seems that $\KK$-equivalences can account for them. 
\end{ex} 

The automorphism groups of commutative $C^*$-algebras in $\KKcat$ are computable from their $\K$-theories using the Universal Coefficient Theorem \cite{RosSch}. As the example suggests, for any locally compact Hausdorff topological space $X$ the group $\Aut_{\KKcat}(\C_0(X))$ always maps to $\Aut_{\DGcorr}(\DGR(\C_0(X))$ and gives some idea about the automorphisms of the DG derived category of topological vector bundles of $X$. However, the automorphism group in $\DGcorr$ will typically be larger than the automorphism group of derived categories consisting of exact equivalences up to a natural transformation.

 \subsection{$\E$-theory and strong deformations up to homotopy} \label{deformation}
 The Connes--Higson $\E$-theory \cite{ConHig} is the universal $C^*$-stable, exact and homotopy invariant functor. Recall that a functor $F$ is {\it exact} if it sends an exact sequence of $\C^*$-algebras $0\map A\map B\map C\map 0$ to an exact sequence $F(A)\map F(B)\map F(C)$ (exact at $F(B)$). It is known that any exact and homotopy invariant functor is split-exact. Therefore, there is a canonical induced functor $\KKcat\functor\mathtt{E}$, where the category $\mathtt{E}$ consists of $C^*$-algebras with bivariant $\E_0$-groups as morphisms. This functor is fully faithful when restricted to nuclear $C^*$-algebras; in fact, by the Choi--Effros lifting Theorem $\KK_*(A,B)\cong\E_*(A,B)$ whenever $A$ is nuclear. Thus, restricted to nuclear $C^*$-algebras there are maps $\E_0(A,B)\cong\KK_0(A,B)\map \Hom_{\DGcorr}(\DGR(A),\DGR(B))$. 
 
 Let $B_\infty:= \C_b([1,\infty),B)/\C_0([1,\infty),B)$, where $\C_b$ denotes bounded continuous functions, be the {\it asymptotic algebra} of $B$. An {\it asymptotic morphism} between $C^*$-algebras $A$ and $B$ is a $*$-homomorphisms $\phi:A\map B_\infty$.
 
A {\it strong deformation} of $C^*$-algebras from $A$ to $B$ is a continuous field $A(t)$ of $C^*$-algebras over $[0,1]$ whose fibre at $0$, $A(0)\cong A$ and whose restriction to $(0,1]$ is the constant field with fibre $A(t)\cong B$ for all $t\in (0,1]$. 

Given any such strong deformation of $C^*$-algebras and an $a\in A$ one can choose a section $\alpha_a(t)$ of the continuous field such that $\alpha_a(0) =a$. Suppose one has chosen such a section $\alpha_a(t)$ for every $a\in A$. Then one associates an asymptotic morphism by setting $(\phi(a))(t) = \alpha_a(1/t), \; t\in[1,\infty)$. Let $\Sigma A:=\C_0((0,1),A)$ be the {\it suspension} of $A$. Then the Connes--Higson picture of $\E$-theory says that $\E_0(A,B) \cong [[\Sigma A\otimes\cpt,\Sigma B\otimes\cpt]]$, where $[[?,-]]$ denotes homotopy classes of asymptotic morphisms between $?$ and $-$. Let $\phi$ be an asymptotic morphism defined by a strong deformation from $A$ to $B$. Then we call the class of $\phi$ in $\E_0(A,B)$ a {\it strong deformation up to homotopy} from $A$ to $B$. If $A$ is nuclear, e.g., if $A$ is commutative, whenever the class of $\phi$ is invertible in $\E_0(A,B)$, we deduce that $\DGR(A)$ and $\DGR(B)$ are equivalent in $\DGcorr$.

\subsection{Homological $\TT$-dualities} \label{T-duality} A {\it sigma model} roughly studies maps $\Sigma\map X$, where $\Sigma$ is called the {\it worldsheet} (Riemann surface) and $X$ the {\it target spacetime} (typically a $10$-dimensional manifold in supersymmetric string theories). Mirror symmetry relates the sigma models of type $\mathtt{IIA}$ and $\mathtt{IIB}$ string theories with dual Calabi--Yau target spacetimes. In open string theories, i.e., when $\Sigma$ has boundaries, the boundaries are constrained to live in some special submanifolds of the spacetime $X$. Such a submanifold also comes equipped with a special {\it Chan-Paton} vector bundle and together they define a topological $\K$-theory class of $X$. This class is also known as a {\it D-brane charge} in physics literature. The homological mirror symmetry conjecture of Kontsevich predicts an equivalence of triangulated categories of $\mathtt{IIA}$-branes (Fukaya category) on a Calabi--Yau target manifold $X$ and $\mathtt{IIB}$-branes (derived category of coherent sheaves) on its dual $\hat{X}$. This equivalence would induce an isomorphism between their Grothendieck groups and it was argued that the Grothedieck group of the category of $A$-branes on $X$ should be isomorphic to $\K_1^{\textup{top}}(\hat{X})$ \cite{Wit}. Strominger--Yau--Zaslow argued that sometimes when $X$ and $\hat{X}$ are mirror dual Calabi--Yau $3$-folds one should be able to find a generically $\TT^3$-fibration over a common base $Z$

\begin{equation} \label{fibration}
\xymatrix{
X
\ar@{-->}[rr]^{\text{$\TT$-duality}}
\ar[dr]
&& \hat{X} \, ,
\ar[dl] \\
& Z
}
\end{equation}

such that mirror symmetry is obtained by applying $\TT$-duality fibrewise \cite{SYZ}. Since $\TT$-duality is applied an odd number of times it interchanges types ($\mathtt{IIA}\leftrightarrow \mathtt{IIB}$). Sometimes using Poincar{\' e} duality type arguments it is possible to identify topological $\K$-theory with $\K$-homology. Kasparov's $\KK$-theory naturally subsumes $\K$-theory and $\K$-homology and it was shown in \cite{BMRS2} that certain topological $\TT$-duality transformations (even including more parameters like $\H$-fluxes, which we did not discuss here) can naturally be seen as $\KK_1$-classes between suitably defined continuous trace $C^*$-algebras capturing the geometry of the above diagram \ref{fibration}. If $\TT$-duality is applied an even number of times it will preserve types. As argued above, whilst an odd number of $\TT$-duality transformations induces a shift in topological $\K$-theory, an even number preserves it due to Bott periodicity and naturally corresponds to a $\KK_0$-class. Therefore, a topological $C^*$-correspondences or a $\KK_0$-class is an abstract generalization of an even number of $\TT$-duality transformations (or $\TT^{2n}$-dualities), viewed as an equivalence of $\mathtt{IIB}$-branes on the same target manifold. Since $\KK$-theory is Bott periodic one can also use the identification $\KK_1(\pounds,\pounds ')\cong \KK_0(\pounds,\C_0((0,1))\otimes\pounds ')$.
 
 \section{Simplicial sets and pro $C^*$-algebras} \label{Homotopy}
 
 In this section we construct a pro $C^*$-algebra from a simplicial set and show that the construction is functorial with respect to {\it proper} maps between simplicial sets and pro $C^*$-algebras. We also show that this construction respects homotopy of proper maps after stabilizing the category of pro $C^*$-algebras with respect to finite matrices.

\subsection{Generalities on simplicial sets and pro $C^*$-algebras} \label{ProCAlg}
The standard reference for simplicial aspects of topology is \cite{GoeJar}. Let $\Delta$ be the cosimplicial category, i.e., the category whose objects are finite ordinals $[n]:=\{0,1,\cdots , n\}$ and whose morphisms are monotonic nondecreasing maps. Its morphisms admit a unique decomposition in terms of {\it coface} and {\it codegeneracy} maps, which satisfy certain well-known relations. Let $\Sets$ be the category of all sets. By a {\it simplicial set} we mean a functor $\Sigma:\Delta^{\op}\functor \Sets$ and a morphism of simplicial sets is a natural transformation between these functors. We denote by $\SSets$ the category of simplicial sets. The elements of $\Sigma[n]$ are called the $n$-simplices (or $n$-dimensional simplices) and the images of the coface and codegeneracy maps in $\Delta$ are called the {\it face} (denoted by $d_i$) and {\it degeneracy} maps (denoted by $s_j$). An $n$-simplex $\sigma$ is called {\it degenerate} if it is of the form $\sigma=s_i(\tau)$ for some $(n-1)$-simplex $\tau$. Degenerate simplices are needed to ensure that maps of graded sets exist, even if the target simplicial set has no nondegenerate simplex in a particular dimension. Simplicial sets provide a combinatorial description of topology. The singular simplices functor and the geometric realization functor are adjoint functors between the category of compactly generated and Hausdorff topological spaces and that of simplicial sets, which induce inverse equivalences between their homotopy categories with respect to their natural model category structures. 

A {\it pro $C^*$-algebra} is a complete Hausdorff topological $*$-algebra over $\CC$ whose topology is determined by its continuous $C^*$-seminorms, i.e., a net $\{a_\lambda\}$ converges to $0$ if and only if $p(a_\lambda)$ tends to $0$ for every $C^*$-seminorm $p$ on it. For any pro $C^*$-algebra $B$ let us denote the closure of $A\subset B$ by $\overline{A}$. It is known that every pro $C^*$-algebra has an approximate identity (Corollary 3.12 \cite{NCP1}). A morphism of pro $C^*$-algebras is a $*$-homomorphism and we do not require them to be continuous. We call such a morphism $f:A\map B$ {\it proper} if $\overline{f(A)B} = B$. These maps are also known as {\it nondegenerate} maps in the literature. Such maps were considered in the context of $C^*$-algebras in \cite{EilLorPed} and they correspond to proper maps between locally compact spaces under Gelfand--Naimark duality. Let us denote the category of pro $C^*$-algebras with proper $*$-homomorphisms by $\PAlg$. Of course, the category of $C^*$-algebras with proper $*$-homomorphisms is a full subcategory of $\PAlg$. Typical examples of commutative pro $C^*$-algebras are of the form $\C(X)$, i.e., complex valued continuous functions on a compactly generated Hausdorff space $X$ with the topology of uniform convergence on compact subsets. Note that such an algebra is always unital. Given any pro $C^*$-algebra $A$ and a $C^*$-seminorm $p$ on it, define the closed ideal $\ker(p)=\{a\in A\, |\, p(a)=0\}$. Then $A/\ker(p)$ is a $C^*$-algebra and the $C^*$-seminorms naturally form a directed family, such that $A\cong \varprojlim_p \; A/\ker(p)$, where this limit is taken in the category of topological $*$-algebras with continuous $*$-homomorphisms and not in $\Csep$, which also has all small limits and colimits. Given any pro $C^*$-algebra $A$ one can define a new pro $C^*$-algebra $\C([0,1],A)$, where every $C^*$-seminorm $p$ on $A$ defines a $C^*$-seminorm $p'$ on $\C([0,1],A)$ by $p'(f) = \textup{sup}_{x\in [0,1]} p(f(x))$. This enables us to define the notion of homotopy of maps between two pro $C^*$-algebras. Given two morphisms $f_1,f_2:A\rightarrow B$ in $\PAlg$ we say that $f_1$ is homotopic to $f_2$ (written as $f_1\sim f_2$) if there is a commutative diagram in $\PAlg$
\beqn
 \xymatrix{ 
 & B \\
A 
\ar[ur]^{f_1}
\ar[r]^{h}
\ar[dr]_{f_2} 
& \C([0,1],B)
\ar[u]_{\ev_0}
\ar[d]^{\ev_1} \\ 
& B } \\
\eeqn and we call $h$ the homotopy map.

Many other well-known constructions available at the level of $C^*$-algebras can also be performed in the category of pro $C^*$-algebras. More details on pro $C^*$-algebras can be found in \cite{NCP1,NCP2}. One of the main results of \cite{NCP1} is that the category of commutative unital pro $C^*$-algebras is equivalent to the category of {\it quasitopological spaces}, which contains the category of compactly generated spaces as a full subcategory. 

\begin{rem}
In general, a $*$-homomorphism between pro $C^*$-algebras need not be automatically continuous. However, if the domain is a $\sigma$-$C^*$-algebra, i.e., its topology is determined by a countable family of $C^*$-seminorms, then automatic continuity holds (Theorem 5.2 of \cite{NCP1}). The category of commutative unital $\sigma$-$C^*$-algebras with unital $*$-homomorphisms is equivalent to that of {\it countably compactly generated} spaces, i.e., spaces which appear as a countable direct limit of compact spaces, with continuous maps.
\end{rem}

\subsection{Simplicial sets and posets}

Let us denote the set of nondegenerate simplices  of $\Sigma$ by $\ND(\Sigma)$. Also set $\ND_{\geqslant 1}(\Sigma):=\{\sigma\in\ND(\Sigma)\,|\, \textup{dim}(\sigma)\geqslant 1\}$. Note that $\ND(\Sigma)=\Sigma[0]\cup\ND_{\geqslant 1}(\Sigma)$. Any degenerate simplex $\sigma$ can be written as $s_1\cdots s_n(\tau)$, where $\tau$ is a nondegenerate simplex which is uniquely determined by $\sigma$. Then $\sigma$ is said to be a {\it degeneracy} of $\tau$. Given any simplicial set $\Sigma$ we define a map (of sets) 
\beqn
\chi: \Sigma&\map& \ND(\Sigma)\\
\sigma &=&\begin{cases} \sigma \text{ if $\sigma$ is nodegenerate,}\\
                     \tau \text{ if $\sigma=s_{i_1}\cdots s_{i_n}(\tau)$ and $\tau$ nondegenerate.}
                     \end{cases}
                     \eeqn

The set $\ND(\Sigma)$ is actually a poset once we set $\sigma_1\leqslant \sigma_2$ if $\sigma_1$ is an iterated face of $\sigma_2$. Any poset can be viewed as a category and thus we may construct the nerve of $\ND(\Sigma)$, which we denote by $B(\Sigma)$. The assigment $\Sigma\mapsto\ND(\Sigma)$ is functorial as any map $f:\Sigma_1\map\Sigma_2$ in $\SSets$ induces a map (functor) of posets $f_*:\ND(\Sigma_1)\map\ND(\Sigma_2)$ sending $\sigma \mapsto \chi(f(\sigma))$. Note that $\textup{dim}(f_*(\sigma))\leqslant \textup{dim}(\sigma)$ and $f_*(\Sigma_1[0])\subset\Sigma_2[0]$.

\subsection{The construction of a pro $C^*$-algebra from a simplicial set}
The idea of the construction presented here is inspired by the one of Cuntz \cite{CunSimp} in the context of locally finite simplicial complexes.

A quiver is an oriented graph with a set of vertices $V$ and oriented edges $E$. There are two maps $s,t:E\map V$ such that if $x=v_1\map v_2$ is an edge in $E$ then $s(x)=v_1$ (source map) and $t(x)=v_2$ (target map). Any  poset can be viewed as a quiver. Our construction is a slight modification of the path algebra of a quiver. Let us denote by $\overline{\ND(\sd(\Sigma))}$ the quiver obtained by adjoining for every edge $x$ in the poset $\ND(\Sigma)$ an edge $x^*$ in the opposite direction, i.e., $s(x)=t(x^*), t(x)=s(x^*)$. The quiver $\overline{\ND(\Sigma)}$ will have the same set of vertices as the poset $\ND(\Sigma)$, but unlike $\ND(\Sigma)$ will also have oriented cycles. The vertices of $\overline{\ND(\Sigma)}$ correspond to nodegenerate simplices of $\Sigma$. This quiver may have infinitely many vertices and edges. An oriented path $\gamma$ of length $n$ in a quiver is just a sequence of edges $\gamma=x_1\cdots x_n$ such that $t(x_i) = s(x_{i+1})$ for all $i=1,\cdots n-1$. The source and the target maps can be extended to paths by setting $s(\gamma)=s(x_1)$ and $t(\gamma)=t(x_n)$. Let $\Lambda$ be the partially ordered set consisting of finite subsets of vertices of $\overline{\ND(\Sigma)}$ (ordered by inclusion of finite subsets). 

An element $a$ of a pro $C^*$-algebra $A$ is called {\it positive} and denoted $a\geq 0$ if $a=u^*u$ for some element $u\in A$. Now one can construct a pro $C^*$-algebra $\Pro\C^*(\Sigma)$ as the universal pro $C^*$-algebra with positive generators $v$, for every vertex $v$ of $\ND(\Sigma)$ and some other generators $x$, for every oriented edge $x$ in $\ND(\Sigma)$, and $x^*$ its formal adjoint denoting the edge in the opposite direction in $\overline{\ND(\Sigma)}$, satisfying the relations:

\begin{enumerate}

\item $xy= \begin{cases} \text{concatenation of $x$ and $y$ if $t(x)=s(y)$,}\\
                                              0 \text{ otherwise,} 
                                              \end{cases}$ \label{Cond1}

\item $xx^* = s(x)$, $x^*x=t(x)$ \label{Cond5} \\

\item $vx = \begin{cases} x \text{ if $s(x)=v$,}\\
                            0 \text{ otherwise,}
                            \end{cases}$ \label{Cond3}
                            
\item $xv = \begin{cases} x \text{ if $t(x)=v$,}\\
                     0 \text{ otherwise,}
                     \end{cases}$ \label{Cond4}

\item $\textup{lim}_{\lambda\in\Lambda}\,\sum_{v\in\lambda} vw = w$ for all vertices $w\in\ND(\Sigma)$, \text{ (convergence in any $C^*$-seminorm)}. \label{Cond2}

\end{enumerate} 

N. C. Phillips introduced certain types of relations in \cite{NCP2}, which he called {\it weakly admissible} (Definition 1.3.4. {\it ibid.}) and it was shown that any set of generators with such relations admits a universal pro $C^*$-algebra (Proposition 1.3.6. {\it ibid.}). Our relations are readily seen to verify all the conditions of weak admissibility. Weakly admissible relations are only expected to be preserved under finite products of representations in $C^*$-algebras, as opposed to arbitrary products. 

\begin{rem}
Depending on one's taste one might also want to regard $\Pro\C^*(\Sigma)$ as a modified graph pro $C^*$-algebra. Note that the monomials in the generators (or the oriented paths containing only edges in the poset $\ND(\Sigma)$) correspond to the nondegenerate simplices of the nerve of the poset $\ND(\Sigma)$, which is closely related to the subdivision of $\Sigma$. Each oriented path records some information about the manner in which the target simplex is connected to other simplices. However the vertices are not viewed as trivial loops and they do not act as a family of orthogonal projections. The finite sums of the generators corresponding to the vertices are made to act like an approximate identity, in other words, a noncommutative partition of unity. Therefore, if $\Sigma$ has countably many nondegenerate simplices then $\Pro\C^*(\Sigma)$ is $\sigma$-unital. 
\end{rem}

The above remark suggests that the algebra $\Pro\C^*(\Sigma)$ essentially models the nerve of the poset $\ND(\Sigma)$, which we denote by $B(\Sigma)$. In fact, it does a little more. It models the nerve of the groupoid obtained by formally inverting all morphisms in $\ND(\Sigma)$. With some foresight, we require a {\it regularity} property on our simplicial sets. This property will play no role in this article but for some of the applications that we have in mind it is good to impose this condition. Therefore, given any simplicial set $\Sigma$ we first subdivide it to form a regular simplicial set $\sd(\Sigma)$ and then apply the construction $\Pro\C^*$ to it. Subdivision of a simplicial set is a functorial construction and more details about it can be found in, e.g., \cite{GoeJar}. Moreover, it is known that $\Sigma$ and $B(\Sigma)$ need not have the same homotopy type. The simple example of a simplicial circle $\Sigma$ with only one nondegenerate $0$-simplex and one nondegenerate $1$-simplex, whose boundaries are identified with the unique $0$-simplex produces a $B(\Sigma)$ which is contractible. Therefore, sending $\Sigma\mapsto B(\Sigma)$ is not a good operation from the point of view of homotopy theory. However, it is known that if $\Sigma$ is a {\it regular} simplicial set, then $B(\Sigma)$ has the right homotopy type. 

\noindent
We define our functor $\cRep$ as one whose map on objects is the composition $\Pro\C^*\circ\sd$. Its behaviour on morphisms will be defined below.

\begin{lem} \label{normBound}
If $\sd(\Sigma)$ has finitely many nondegenerate simplices $v_1,\cdots , v_n$ then $p(v_i)=1$ for any $C^*$-seminorm $p$ on $\cRep(\Sigma)$.
\end{lem}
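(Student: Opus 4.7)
\emph{The plan} is to show that the generators $\{v_1,\dots,v_n\}$ form a family of mutually orthogonal projections summing to a unit of $\cRep(\Sigma)$, so that every $C^*$-seminorm automatically evaluates each to $0$ or $1$, and then to propagate the value $1$ along the Hasse diagram. Because $\sd(\Sigma)$ has only finitely many nondegenerate simplices, the indexing poset $\Lambda$ of relation (5) has a greatest element, so the approximate identity collapses into the literal equality $ew=w$ for every vertex $w$, where $e:=v_1+\cdots+v_n$. Taking adjoints (each $v_i$ is self-adjoint as $v_i\geq 0$) gives $we=w$, and extending through the edge relations (1)--(4) by associativity and linearity shows that $e$ is a two-sided unit of $\cRep(\Sigma)$.

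Next, each $v_i$ is a projection: every vertex of the Hasse quiver $\ND(\sd(\Sigma))$ is a source or a target of some edge $x$ (each nondegenerate simplex of $\sd(\Sigma)$ either has a face or is a face of a higher one), so by relation (2) either $v_i=xx^*$ or $v_i=x^*x$. In the first case, using $x\,t(x)=x$ from relation (4) one computes
\[
v_i^2 \;=\; x\,(x^*x)\,x^* \;=\; x\,t(x)\,x^* \;=\; x\,x^* \;=\; v_i,
\]
and the target case is symmetric via $s(x)\,x=x$ from (3). Mutual orthogonality $v_iv_j=0$ for $i\neq j$ follows along the same lines: writing $v_j=xx^*$ with $s(x)=v_j\neq v_i$, relation (3) forces $v_ix=0$, whence $v_iv_j=v_i(xx^*)=0$, and the symmetric case is analogous.

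Consequently $\{v_i\}$ is a pairwise orthogonal family of projections in the unital algebra $\cRep(\Sigma)$ with $\sum v_i=1$. For any non-zero $C^*$-seminorm $p$ one then has $p(1)=1$ and, by $p(v_i)^2=p(v_i^2)=p(v_i)$, the dichotomy $p(v_i)\in\{0,1\}$. The value $1$ propagates through the Hasse quiver: for every edge $x$ joining $v_j=s(x)$ to $v_i=t(x)$, the $C^*$-identity gives
\[
p(v_j)\;=\;p(xx^*)\;=\;p(x)^2\;=\;p(x^*x)\;=\;p(v_i),
\]
so $p$ is constant on each connected component of $\ND(\sd(\Sigma))$. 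Since at least one $p(v_i)$ must equal $1$ (because $1=p(1)=\sum p(v_i)$ with each term in $\{0,1\}$), connectedness of the Hasse quiver forces the common value to be $1$.

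\emph{The main obstacle} I anticipate is the case of an isolated nondegenerate $0$-simplex of $\sd(\Sigma)$: for such a generator, relations (2)--(4) are vacuous, so neither its being a projection nor its orthogonality with the others is automatic, and the propagation step has nothing to propagate along. The argument sketched above therefore tacitly relies on $\sd(\Sigma)$ having no isolated components, which is the setting of interest here.
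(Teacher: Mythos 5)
Your route is genuinely different from the paper's, and the difference matters. For the upper bound the paper never shows that the $v_i$ are projections: it argues purely from positivity. Since $e=\sum_j v_j$ is a unit, $\sum_j v_iv_jv_i=v_i e v_i=v_i^2$ with each summand $v_iv_jv_i=v_i^*v_jv_i\geq 0$, whence $v_i^3\leqslant v_i^2$, then $v_i^4\leqslant v_i^2$, then $p(v_i)^4\leqslant p(v_i)^2$ and $p(v_i)\leqslant 1$ --- a local argument valid at every vertex of every component. Your substitute (each $v_i$ equals $xx^*$ or $x^*x$ for some edge $x$, hence is a projection, and the $v_i$ are mutually orthogonal) is clean where it applies, but it already requires every vertex to be incident to an edge. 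You concede this fails for isolated nondegenerate $0$-simplices, yet you cannot set that case aside: $\sd$ of a point is a point, so $\Sigma=\Delta^0$, or any finite $\Sigma$ with a point component, falls squarely within the hypothesis of the lemma.

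The more serious gap is the last step. From the dichotomy $p(v_i)\in\{0,1\}$ you need \emph{all} values to equal $1$, and you obtain this by invoking ``connectedness of the Hasse quiver.'' Nothing in the hypothesis provides that: finiteness of $\sd(\Sigma)$ does not make $\ND(\sd(\Sigma))$ connected (take $\Sigma$ a disjoint union of two $1$-simplices). Your argument then only shows that $p$ is constant on each component and equals $1$ on at least one of them, and this cannot be repaired within your framework, because there genuinely are representations of the generators and relations that send every generator of one component to $0$ (all relations, including the approximate-identity relation, are preserved), hence $C^*$-seminorms vanishing on an entire component. So the dichotomy is the wrong intermediate statement to aim for. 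The paper's $p(v_i)\leqslant 1$ avoids this entirely by being local; its lower bound $p(v_i)\geqslant 1$ does lean on the edge relations $v_ix=x$ and $xv_i=x$ much as yours does, so on that half you are in similar company, but you should not present connectedness as automatic. A minor further point: $p(1)=\sum_i p(v_i)$ is not legitimate for a seminorm (only subadditivity holds); the inequality $1=p(\sum_i v_i)\leqslant\sum_i p(v_i)$ is what you actually need there, and it does suffice for that one step.
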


\begin{proof}
By definition $v_i\geq 0$ for all vertices $v_i\in\ND(\Sigma)$, whence $u v_i u\geq 0$ for any $u\in\ND(\sd(\Sigma))$ since $x\geq 0 \Rightarrow y^*xy\geq 0$ and $u^*=u$. Thus we obtain $\sum_i v_i v_j v_i = v_i^2$ which implies that $ v_i v_j v_i \leqslant v_i^2$ for all $j=1,\cdots ,n$. For the term corresponding to $j=i$ we get $v_i^2 - v_i^3\geq 0$ and $v_i$ commutes with it, whence $v_i(v_i^2 - v_i^3)\geq 0$. Thus, $v_i^4 \leqslant v_i^3\leqslant v_i^2$ implying $p(v_i^4)=p(v_i)^4 \leqslant p(v_i^2)=p(v_i)^2$. This shows that $p(v_i)\leqslant 1$ and the relations \eqref{Cond3} and \eqref{Cond4} imply that $p(v_i)\geqslant 1$ for any $v_i\in\ND(\Sigma)$.
\end{proof}

\begin{rem}
If $\Sigma$ is {\it finite}, i.e., $\Sigma$ has only finitely many nondegenerate simplices, which implies that the geometric realization of $\Sigma$ is compact and Hausdorff, then $\sd(\Sigma)$ also has finitely many nondegenerate simplices and $\cRep(\Sigma)$ is unital with the sum of all the vertices of $\ND(\sd(\Sigma))$ acting as the identity. 

Thus, from the above Lemma, we conclude that for finite simplicial sets (or compact topological spaces) the construction actually yields a genuine unital $C^*$-algebra by putting a bound on the norms of all the generators. Indeed, the norms of the generators corresponding to the edges are bound by those of the vertices, which is evident from the relation \eqref{Cond5}. 
\end{rem}

For any pro $C^*$-algebra $A$ let $b(A)$ denote the set of {\it bounded elements} in $A$, i.e., $b(A)=\{a\in A\,|\, \textup{sup}_p \,p(a) <\infty, \text{ $p$ $C^*$-seminorm on $A$}\}$. It is shown in \cite{NCP2} that $b(A)$ equipped with this supremum norm is a $C^*$-algebra. If $A=\C(X)$, i.e., the algebra of continuous functions on some topological space $X$, then $b(A)=\C_b(X)$, i.e., the algebra of bounded continuous functions on $X$. The construction $A\mapsto b(A)$ is functorial with respect to $*$-homomorphisms (not necessarily continuous). Therefore, one can compose our functor $\cRep$ with the functor $A\mapsto b(A)$ to obtain a genuine $C^*$-algebra, but, this construction will lose a lot of information if the simplicial set is not finite. 

\begin{ex} 
Let $\Delta^n:=\Hom_\Delta(-,[n])$. Then $\Delta^0$ is a simplicial set with only one degenerate simplex in each dimension (except in dimension $0$, which has only one element) and one finds $\cRep(\Delta^0)\cong\CC$. 
\end{ex}

\begin{ex} \label{ProC}
Now $\Delta^1$ is a simplicial set with two $0$-simplices $\{a,b\}$ and one nondegenerate $1$-simplex, which we call $c$. The poset $\ND(\sd(\Delta^1))$ looks like

\beqn
\xymatrix{
&d
&&e\\
a
\ar[ur]^{x_1}
&& b
\ar[ul]_{x_2}
\ar[ur]^{x_3}
&&c
\ar[ul]_{x_4}
},
\eeqn where $a,b,c$ are the $0$-simplices and $d,e$ the nondegenerate $1$-simplices. Then the graph $\overline{\ND(\sd(\Delta^1))}$ is 

 \beqn
\xymatrix{
&d
\ar@/^/[dl]^{x_1^*}
\ar@/_/[dr]_{x_2^*}
&&e
\ar@/^/[dl]^{x_3^*}
\ar@/_/[dr]_{x_4^*}\\
a
\ar@/^/[ur]^{x_1}
&& b
\ar@/_/[ul]_{x_2}
\ar@/^/[ur]^{x_3}
&&c
\ar@/_/[ul]_{x_4}
}.
\eeqn

The univeral $C^*$-algebra $\cRep(\Delta^1)$ is described by the obvious relations coming from multiplications of paths in this quiver, e.g., $x_1 x_2 =0$, $bx_2=x_2$, $x_2^*b = x_2^*$, $(a+b+c+d+e) =1$ and so on. On the other hand, the $C^*$-algebra $\C([0,1])$ is the universal $C^*$-algebra with a presentation $\mathcal{R}:=\{1,y\,|\, y\geqslant 0, 1\geqslant 0, \|y\|\leqslant 1,\|1-y^2\|\leqslant 1, 1y=y1=y, 1^2=1\}$. Sending $y\mapsto\id$ and $1\mapsto 1$ defines a representation of the generators in $\C([0,1])$ and hence a map $\C^*(\mathcal{R})\map\C([0,1])$. The inverse map is obtained by applying continuous functional calculus to the element $y\in\C^*(\mathcal{R})$. 
\end{ex}

\subsection{Functoriality with respect to proper maps} \label{ProperMaps}

The construction $\cRep$ is functorial only with respect to a genuine subset of morphisms of simplicial sets. We propose the following definition of a {\it proper} map between simplicial sets.

\begin{defn} \label{PropMap}
A map $f:\Sigma_1\map\Sigma_2$ of simplicial sets is {\it proper} if the induced map $\ND(f):\ND(\Sigma_1)\map\ND(\Sigma_2)$ between posets is proper, i.e., preimage of any element in the target poset is finite. 
\end{defn}

For instance, suppose $f:\Sigma\map\Delta^0$ is a map of simplicial sets such that $\Sigma$ has infinitely many nondegenerate simplices. The poset of $\Delta^0$ consists of just its $0$-simplex and its preimage is all of $\ND(\Sigma)$ under the induced map on posets. Therefore, such a map is not proper as it should be. Topologically it is mapping a noncompact space to a point. Since $\ND:\SSets\functor\mathtt{Posets}$ is a functor, it can be checked that the composition of proper maps is again proper so that simplicial sets with proper maps form a category. We denote the category of simplicial sets with proper maps by $\SSpro$. The subdivision of a finite simplicial set is once again finite. If $f:\Sigma_1\map\Sigma_2$ is a proper map of simplicial sets then $\sd(f)$ is also proper as the preimage of any element in $\ND(\Sigma_2)$ is at most the subdivision of the finite simplicial set generated by the finitely many preimages of $f$ in $\ND(\Sigma_1)$, i.e., $\sd:\SSpro\functor\SSpro$ is a functor. Given any proper map of simplicial sets $f:\Sigma_1\map\Sigma_2$ one defines the induced map $\tilde{f}:\cRep(\Sigma_2)\map\cRep(\Sigma_1)$ by its value on the generators $v$ [vertex in $\ND(\sd(\Sigma_2))$] and $x,x^*$ [edges in $\overline{\ND(\sd(\Sigma_2))}$]

$\tilde{f}(v) = \begin{cases} \sum w, \text{  $w\in f^{-1}(v)$}, \\
                                                                                    0 \text{ if $f^{-1}(v)=\emptyset$,}
                          \end{cases}$

$\tilde{f}(x) = \begin{cases} \sum y, \text{  $y$ edge such that $s(y)\in f^{-1}(s(x))$, $t(y)\in f^{-1}(t(x))$}, \\
                                                                                    0 \text{ if no such $y$ exists,}
                          \end{cases}$ 
                          
$\tilde{f}(x^*)=\tilde{f}(x)^*$.
                          
By the properness of $f$ the sum on the right hand side in the first two cases is always finite. It is easy to check that relations \eqref{Cond1}, \eqref{Cond5}, \eqref{Cond3} and \eqref{Cond4} are satisfied and relation \eqref{Cond2} is verified by the following Proposition. For any simplicial set $\Sigma$ let us write $V(\Sigma)$ for the set of vertices of $\ND(\sd(\Sigma))$. 

\begin{prop}
Whenever $\textup{lim}_\lambda\sum_{v\in\lambda} vw= w$ in $\C^*(\Sigma_2)$, $\textup{lim}_\lambda\sum_{v\in\lambda}\tilde{f}(v)\tilde{f}(w)=\tilde{f}(w)$ in $\C^*(\Sigma_1)$, where $\lambda$ runs through finite subsets of $V(\Sigma_2)$. 
\end{prop}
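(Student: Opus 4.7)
The plan is to reduce the convergence of the $\tilde{f}$-transformed partial sums to relation~\eqref{Cond2} already valid inside $\cRep(\Sigma_1)$, exploiting the properness of $f$ twice: once to keep all algebraic sums finite, and once to pass between indexing directed sets. First I would unfold the definition of $\tilde{f}$ on generators. Since $f^{-1}(v)$ is finite for every $v\in V(\Sigma_2)$ and the family $\{f^{-1}(v)\}_{v\in\lambda}$ partitions $f^{-1}(\lambda)$, a rearrangement gives
\begin{equation*}
\sum_{v\in\lambda}\tilde{f}(v)\,\tilde{f}(w) \;=\; \Bigl(\sum_{v'\in f^{-1}(\lambda)} v'\Bigr)\tilde{f}(w),
\end{equation*}
and $\tilde{f}(w)=\sum_{w'\in f^{-1}(w)}w'$ is itself a finite sum of vertices in $V(\Sigma_1)$, again by properness.

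The crux is a cofinality observation, which I expect to be the only nontrivial step. As $\lambda$ ranges over the finite subsets of $V(\Sigma_2)$ directed by inclusion, the family $\{f^{-1}(\lambda)\}$ is cofinal in the analogous directed set of finite subsets of $V(\Sigma_1)$: given any finite $\mu_0\subseteq V(\Sigma_1)$, the image $\lambda_0:=f(\mu_0)\subseteq V(\Sigma_2)$ is finite, and any $\lambda\supseteq\lambda_0$ satisfies $f^{-1}(\lambda)\supseteq f^{-1}(f(\mu_0))\supseteq\mu_0$. This is precisely where properness of $f$ is genuinely used, and without it the reparametrization of the net would fail to be order-compatible.

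Granting cofinality, the convergence is a finite triangle-inequality argument. For each of the finitely many $w'\in f^{-1}(w)$, relation~\eqref{Cond2} applied inside $\cRep(\Sigma_1)$ yields $\lim_{\mu}\sum_{v'\in\mu}v'w'=w'$ in every continuous $C^*$-seminorm, with $\mu$ running over finite subsets of $V(\Sigma_1)$. Summing these finitely many limits over $w'\in f^{-1}(w)$, substituting $\mu=f^{-1}(\lambda)$ using cofinality, and recognizing the result as $\tilde{f}(w)$ gives
\begin{equation*}
\lim_{\lambda}\;\Bigl(\sum_{v'\in f^{-1}(\lambda)} v'\Bigr)\tilde{f}(w) \;=\; \sum_{w'\in f^{-1}(w)} w' \;=\; \tilde{f}(w)
\end{equation*}
in $\cRep(\Sigma_1)$, which is the required identity. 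The upshot is that this Proposition is precisely what licenses the extension of $\tilde{f}$ to a well-defined $*$-homomorphism via the universal property, since relations \eqref{Cond1}, \eqref{Cond5}, \eqref{Cond3} and \eqref{Cond4} are purely combinatorial and were already noted to be immediate from the definition of $\tilde{f}$ on generators.
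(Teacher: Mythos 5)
Your argument is correct and is essentially the paper's own proof in different clothing: the rearrangement $\sum_{v\in\lambda}\tilde{f}(v)=\sum_{v'\in f^{-1}(\lambda)}v'$, the use of properness to keep these sums finite, and the key inclusion $f^{-1}(f(\Omega))\supseteq\Omega$ (your cofinality step) are exactly the ingredients in the paper, which runs the same reparametrization as an explicit $\epsilon$--$\Omega$ estimate rather than in the language of cofinal directed sets. Your explicit triangle-inequality summation over the finitely many $w'\in f^{-1}(w)$ is, if anything, slightly more careful than the paper's ``replace $b$ by $\tilde{f}(w)$'' shortcut, since relation \eqref{Cond2} is stated only for single vertices.
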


\begin{proof}
Let us set $a_{\Omega} = \sum_{\omega\in\Omega} \omega$ for any finite $\Omega\subset V(\Sigma)$. For any $b\in\ND(\Sigma_1)$ and given any $\epsilon>0$, there is a finite $\Omega\subset V(\Sigma_1)$ such that $p( a_{\Omega'} b - b) < \epsilon$ for all $\Omega'\supset\Omega$ and $C^*$-seminorm $p$ on $\C^*(\Sigma_1)$. Then $f(\Omega)$ is a finite subset of $\Sigma_2$ and $\tilde{f}(a_{f(\Omega)}) = \sum_{f(v)\in f(\Omega)} v$. Since $f^{-1}(f(\Omega))\supset\Omega$ we obtain $p(\tilde{f}(a_{{\Omega''}})b -b) =p(\sum_{w\in{\Omega''}}\tilde{f}(w)b -b) < \epsilon$ for every ${\Omega''}\supset f(\Omega)$. Now we may replace $b$ by $\tilde{f}(w)$, $w\in V(\Sigma_2)$ to deduce $\textup{lim} \sum_{v\in\lambda} \tilde{f}(v)\tilde{f}(w) =\tilde{f}(w)$.

It is clear that if $f$ is a map between finite simplicial sets then the induced map $\tilde{f}$ is a unital map of unital $C^*$-algebras.
\end{proof}

\begin{lem}
Then map $\tilde{f}$ is a proper map of pro $C^*$-algebras.
\end{lem}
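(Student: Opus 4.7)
The plan is to verify the defining condition $\overline{\tilde{f}(\cRep(\Sigma_2))\cdot\cRep(\Sigma_1)} = \cRep(\Sigma_1)$ by showing that $\tilde{f}$ carries an approximate identity of $\cRep(\Sigma_2)$ to a net that still witnesses an approximate identity of $\cRep(\Sigma_1)$; nondegeneracy is then immediate.

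First I would fix the natural net $e_\mu := \sum_{v\in\mu} v$ indexed by finite subsets $\mu\subset V(\Sigma_2)$. Using the definition of $\tilde{f}$ on vertex generators together with the hypothesis that $f$ is proper (so that $f^{-1}(\mu)$ is finite), one computes
\[
\tilde{f}(e_\mu) \;=\; \sum_{v\in\mu}\tilde{f}(v) \;=\; \sum_{w\in f^{-1}(\mu)} w.
\]
I would then record a cofinality statement: as $\mu$ ranges over finite subsets of $V(\Sigma_2)$, the preimages $f^{-1}(\mu)$ exhaust all finite subsets of $V(\Sigma_1)$, because for any finite $\nu\subset V(\Sigma_1)$ the set $\mu := f(\nu)$ is finite and $f^{-1}(\mu)\supset \nu$. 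Hence $(\tilde{f}(e_\mu))_\mu$ is a subnet of the canonical net $(\sum_{w\in\nu} w)_\nu$ singled out by relation~\eqref{Cond2}.

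Next I would upgrade relation~\eqref{Cond2}, which a priori only asserts convergence $\sum_{v\in\lambda} v w \to w$ on vertex generators $w$, to the statement that the vertex sums form an approximate identity for all of $\cRep(\Sigma_1)$. On an edge generator $x$ the net is eventually constant equal to $x$ by relations~\eqref{Cond3} and~\eqref{Cond4}, and the analogous statement for $x^*$ follows by taking adjoints; extension to monomials, polynomials, and then to arbitrary elements proceeds by linearity together with continuity of multiplication in each $C^*$-seminorm, applied on the dense $*$-subalgebra generated by the vertices and edges. Combined with the cofinality step this yields $\tilde{f}(e_\mu) b \to b$ in every $C^*$-seminorm of $\cRep(\Sigma_1)$ and for every $b$, which is exactly nondegeneracy of $\tilde{f}$.

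The main technical obstacle I anticipate is precisely the last extension step: in the pro $C^*$-setting one lacks a uniform global norm bound on $e_\lambda$, so one cannot directly invoke the usual Cohen factorization style arguments. The natural fix is to work seminorm by seminorm, passing for each continuous $C^*$-seminorm $p$ to the quotient $C^*$-algebra $\cRep(\Sigma_1)/\ker(p)$ discussed in~\S\ref{ProCAlg}, where the image of $(e_\lambda)$ is norm-bounded (by an argument along the lines of Lemma~\ref{normBound}, applied to the finitely generated sub-pro $C^*$-algebra relevant to any given element) and the standard bounded-approximate-identity argument in a $C^*$-algebra applies verbatim. This reduction to the $C^*$-quotients, rather than the cofinality combinatorics, is where the real work lies.
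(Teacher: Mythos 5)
Your argument is essentially the paper's: its proof of this lemma simply observes that the proof of the preceding Proposition (which already contains your cofinality step $f^{-1}(f(\Omega))\supset\Omega$) shows that the nets $a_\lambda=\sum_{v\in\lambda}\tilde{f}(v)$ act as an approximate identity in $\cRep(\Sigma_1)$, so that every $b=\lim_\lambda a_\lambda b$ lies in $\overline{\tilde{f}(\cRep(\Sigma_2))\cRep(\Sigma_1)}$. One small remark: the seminorm-by-seminorm extension to arbitrary elements that you flag as the main obstacle is not actually needed for nondegeneracy, since it suffices to verify $\tilde{f}(e_\mu)b\to b$ for $b$ in the dense $*$-subalgebra generated by the vertices and edges, where (as you note) the net is eventually constant on each monomial and no norm bound on $e_\lambda$ is required.
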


\begin{proof}
The proof of the above Proposition actually shows that $\{a_\lambda=\sum_{v\in\lambda} \tilde{f}(v)\}$, as $\lambda$ runs through finite subset of $\Sigma_2$, acts as an approximate identity in $\cRep(\Sigma_1)$. Since each $a_\lambda b\in f(\cRep(\Sigma_2))\cRep(\Sigma_1)$, one finds that for any $b\in \cRep(\Sigma_1)$, $b=\lim_\lambda a_\lambda b$ lies in $\overline{f(\cRep(\Sigma_2))\cRep(\Sigma_1)}$. 
\end{proof}

\begin{cor}
The construction $\cRep$ defines a functor $(\SSpro)^{\op}\map\PAlg$.
\end{cor}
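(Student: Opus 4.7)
The plan is to observe that, by virtue of the preceding Proposition and Lemma, for every proper map $f\colon\Sigma_1\to\Sigma_2$ in $\SSpro$ the prescription given on generators extends to a genuine proper $*$-homomorphism $\tilde f\colon\cRep(\Sigma_2)\to\cRep(\Sigma_1)$, i.e., a morphism in $\PAlg$. Thus the only thing left to check is functoriality---preservation of identities and contravariant compatibility with composition---which follows by direct unpacking of the definitions combined with the universal property of $\cRep(-)$.

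For identities, the induced poset map $\ND(\sd(\id_\Sigma))$ is the identity, so $\id^{-1}(v)=\{v\}$ and $\id^{-1}(x)=\{x\}$ for every vertex $v$ and every edge $x$ of $\overline{\ND(\sd(\Sigma))}$. Hence $\widetilde{\id_\Sigma}$ fixes each generator of $\cRep(\Sigma)$, and by universality it equals the identity $*$-homomorphism. For composition, given proper maps $f\colon\Sigma_1\to\Sigma_2$ and $g\colon\Sigma_2\to\Sigma_3$, the functoriality of $\sd$ and $\ND$ yields the set-theoretic identity $(g\circ f)^{-1}(u)=\bigsqcup_{v\in g^{-1}(u)}f^{-1}(v)$ for every vertex $u\in V(\Sigma_3)$, and an analogous decomposition for edges once one tracks source and target conditions. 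On a vertex generator $u$ this gives
\[
\tilde f\bigl(\tilde g(u)\bigr)=\tilde f\Bigl(\sum_{v\in g^{-1}(u)} v\Bigr)=\sum_{v\in g^{-1}(u)}\sum_{w\in f^{-1}(v)} w=\sum_{w\in(g\circ f)^{-1}(u)} w=\widetilde{g\circ f}(u),
\]
and a verbatim computation covers the edge generators $x$ and their formal adjoints $x^*$. Since two $*$-homomorphisms out of $\cRep(\Sigma_3)$ agreeing on the universal generating set must coincide---invoking the universal property again, rather than continuity, which is not assumed---we conclude $\widetilde{g\circ f}=\tilde f\circ\tilde g$.

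The main (and essentially only) obstacle is purely bookkeeping: one must track how $\sd$ interacts with preimages under $f$, and how the opposite-edge doubling $\ND(-)\rightsquigarrow\overline{\ND(-)}$ is preserved, so that the edge computation really does partition the $f^{-1}$-preimages by matching source and target conditions. No further analytic or homotopy-theoretic input is needed, and the corollary is in the end a tidy packaging of the preceding lemmas.
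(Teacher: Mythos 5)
Your proposal is correct and follows the same route as the paper: the corollary is stated there without proof, as an immediate consequence of the preceding Proposition and Lemma, and your explicit check of identities and of $\widetilde{g\circ f}=\tilde f\circ\tilde g$ on generators (using that preimages under the induced poset maps decompose as disjoint unions, and uniqueness of the extension guaranteed by the universal property of the generators-and-relations construction) just fills in the routine verification the paper leaves implicit.
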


\subsection{$\mathbb{M}_n$-stabilization and proper homotopy diagrams}
Let $A$ be any algebra and $\mathbb{M}_n(A)$ the algebra of $n\times n$ matrices over $A$. There is a corner embedding map $A\map \mathbb{M}_n(A)$ sending $a\mapsto \textup{M}_{ij}$, with $\textup{M}_{11}=a$ and $\textup{M}_{ij}=0$ otherwise. We would like to formally invert such maps. Clearly such maps are not proper, but their {\it formal inverses} are! So we enlarge the morphisms of $\PAlg$ by adjoining such maps and all compositions. Then we formally invert the corner embeddings and denote the $\mathbb{M}_n$-stabilized category by $\SPAlg$.

\begin{rem}
The collection of all corner embeddings is a genuine class of morphisms and if one inverts them one might potentially run into set-theoretic problems. Restricted to class of $\sigma$-$C^*$-algebras, it is clear that the resulting category is a subcategory of the model category consisting of {\it $\nu$-sequentially complete l.m.c. $C^*$-algebras} as constructed in \cite{JoaJoh} and the localization can be carried out inside this model category. For more general pro $C^*$-algebras this localization is potentially set-theoretically problematic and makes sense only in some enlarged universe.
\end{rem}

The product of two simplicial sets $\Sigma_1\times\Sigma_2$ is defined to be $(\Sigma_1\times\Sigma_2)[n] =\Sigma_1[n]\times\Sigma_2[n]$ with coordinatewise face and degeneracy maps. Note that this is not the categorical product in $\SSpro$. In fact, products  do not exist in $\SSpro$ in general. It is possible that $(\sigma,\sigma')$ be nondegenerate in $\Sigma_1\times\Sigma_2$ although both $\sigma$ and $\sigma'$ are degenerate in $\Sigma_1$ and $\Sigma_2$ respectively, as $\sigma$ and $\sigma'$ can be degeneracies by dissimilar sequences of degeneracy maps.

If $f_1, f_2:\Sigma_1\map\Sigma_2$ are two maps between simplicial sets then a simplicial homotopy between $f_1$ and $f_2$ is a commutative diagram 

\begin{equation} \label{SimpHomotopy}
\hspace{10mm} \xymatrix{ 
 {\Delta^0\times\Sigma_1=\Sigma_1} 
\ar[dr]^{f_1} 
\ar[d]_{d^0\times 1} \\
\Delta^1\times\Sigma_1
\ar[r]^{\gamma}
& \Sigma_2\; ,\\
{\Delta^0\times\Sigma_1=\Sigma_1}
\ar[u]^{d^1\times 1}
\ar[ur]_{f_2}\\}. 
\end{equation} where $d^0,d^1$ are maps induced by the coface operators and $\gamma$ is called the homotopy operator. Simplicial homotopy of maps is not an equivalence relation in general. It is so if the target simplicial set is fibrant. So one considers the equivalence relation generated by simplicial homotopy relation and by taking the equivalence classes of maps one obtains a na\"ive homotopy category of simplicial sets. If in the above diagram all the maps are proper we obtain a {\it proper homotopy diagram}. Note that the maps $d^i\times 1$, $i=1,2$ and $\id$ are always proper. We denote by $\PHoSS$ the category of simplicial sets with proper homotopy classes of proper maps between them, i.e., we identify two maps $f_1$ and $f_2$ whenever there is a proper homotopy diagram as above.

\begin{prop}
If $f_1,f_2:\Sigma_1\map\Sigma_2$ are properly homotopic then $\tilde{f_1}$ and $\tilde{f_2}$ are homotopic in $\SPAlg$.
\end{prop}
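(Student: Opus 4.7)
The strategy is to apply the contravariant functor $\cRep$ to the proper homotopy diagram \eqref{SimpHomotopy} and factor the resulting map through a path object of the form $\C([0,1],\mathbb{M}_n(\cRep(\Sigma_1)))$, thereby producing a homotopy between $\tilde{f}_1$ and $\tilde{f}_2$ in $\SPAlg$.

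By Subsection \ref{ProperMaps}, applying $\cRep$ to \eqref{SimpHomotopy} yields a map $\tilde{\gamma}:\cRep(\Sigma_2)\map\cRep(\Delta^1\times\Sigma_1)$ in $\PAlg$ with $\widetilde{d^i\times 1}\circ\tilde{\gamma}=\tilde{f}_i$ for $i=0,1$. The key step is the construction of a $*$-homomorphism
\beqn
\Phi:\cRep(\Delta^1\times\Sigma_1)\map \C([0,1],\mathbb{M}_n(\cRep(\Sigma_1)))
\eeqn
such that $\ev_0\circ\Phi$ and $\ev_1\circ\Phi$ coincide, up to a corner embedding, with $\widetilde{d^0\times 1}$ and $\widetilde{d^1\times 1}$ respectively. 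Since corner embeddings are inverted in $\SPAlg$, setting $h:=\Phi\circ\tilde{\gamma}$ would then supply the desired homotopy.

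To define $\Phi$, I would invoke the universal property of $\cRep(\Delta^1\times\Sigma_1)$ spelled out in Subsection \ref{ProperMaps}. The nondegenerate simplices of $\Delta^1\times\Sigma_1$ admit an Eilenberg--Zilber shuffle decomposition: each nondegenerate $k$-simplex $\sigma$ of $\Sigma_1$ lies under $k+1$ staircase $(k+1)$-simplices of $\Delta^1\times\Sigma_1$, parametrized by the position along $\sigma$ at which the $\Delta^1$-coordinate jumps from $0$ to $1$. Using the identification $\cRep(\Delta^1)\cong\C([0,1])$ from Example \ref{ProC}, each such staircase contributes a continuous piecewise-affine function on a subinterval of $[0,1]$, tensored with the pullback of the $\cRep(\Sigma_1)$-generator corresponding to $\sigma$ under the projection $\pi_2:\Delta^1\times\Sigma_1\map\Sigma_1$. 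Packaging the several staircases that project onto the same $\sigma$ as entries of an $\mathbb{M}_n$-valued continuous function over $[0,1]$ defines the images of the generators of $\cRep(\Delta^1\times\Sigma_1)$, and properness of $\gamma$ ensures that $n$ may be chosen finite on the image of $\tilde{\gamma}$ over any finite subcomplex of $\Sigma_2$.

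The main technical obstacle is the verification that this assignment respects all of the universal relations \eqref{Cond1}--\eqref{Cond2}. Relations \eqref{Cond1}, \eqref{Cond3} and \eqref{Cond4} reduce to support disjointness of the staircase functions on $[0,1]$ together with the combinatorics of staircase concatenation; the partial isometry relation \eqref{Cond5} is matched by the matrix unit structure on $\mathbb{M}_n$; and the approximate identity relation \eqref{Cond2} follows from the locally finite convergence of the vertex contributions in each $C^*$-seminorm on $\C([0,1],\mathbb{M}_n(\cRep(\Sigma_1)))$, which holds because the shuffles over any fixed vertex of $\Sigma_1$ form a finite family. The $\mathbb{M}_n$-stabilization in $\SPAlg$ is precisely what permits absorbing the shuffle multiplicity into a single morphism, which is the reason the proposition is formulated in $\SPAlg$ rather than in $\PAlg$.
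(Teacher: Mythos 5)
Your overall strategy --- apply $\cRep$ to the proper homotopy diagram and factor $\tilde{\gamma}$ through a path object $\C([0,1],\mathbb{M}_n(\cRep(\Sigma_1)))$, then use the inverted corner embeddings of $\SPAlg$ to identify this with $\C([0,1],\cRep(\Sigma_1))$ --- is exactly the paper's. But your construction of the connecting map is genuinely different from the paper's, and it has real gaps. The paper takes $n=2$ once and for all and defines $\eta(\delta)=\omega_t\,\mathrm{diag}\bigl((\tilde{d^0}\times\tilde{1})(\delta),\,(\tilde{d^1}\times\tilde{1})(\delta)\bigr)\,\omega_t^{-1}$, with $\omega_t$ the rotation matrix. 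Since this is a pointwise unitary conjugate of the direct sum of two $*$-homomorphisms that already exist (the induced maps of the two face inclusions), every relation \eqref{Cond1}--\eqref{Cond2} holds automatically, and nothing about the internal simplicial structure of $\Delta^1\times\Sigma_1$ needs to be unwound.

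The first gap in your version is the matrix size. The shuffle multiplicity over a nondegenerate $k$-simplex of $\Sigma_1$ is $k+1$, so unless $\Sigma_1$ is finite-dimensional there is no single finite $n$ for which your $\Phi$ lands in $\C([0,1],\mathbb{M}_n(\cRep(\Sigma_1)))$. ``Choosing $n$ finite over any finite subcomplex'' does not define a morphism in $\SPAlg$: the stabilization only inverts corner embeddings into $\mathbb{M}_n$ for a fixed finite $n$, and a target with locally varying matrix size is not an object of the category. The second gap is the relation check you defer. Relation \eqref{Cond5} demands the exact identities $xx^*=s(x)$ and $x^*x=t(x)$; if the image of an edge generator carries a piecewise-affine scalar $g(t)$ as you propose, then $\Phi(x)\Phi(x)^*$ acquires the factor $|g(t)|^2$, which must coincide exactly with whatever function decorates $\Phi(s(x))$, and likewise \eqref{Cond1}, \eqref{Cond3}, \eqref{Cond4} impose exact (not approximate) orthogonality and concatenation conditions on the staircase functions. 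That verification is not routine bookkeeping --- it is the entire content of the construction you are sketching, and there is no evident reason it closes up. The moral is that the unitary-conjugation trick is not an optional shortcut here; it is precisely what makes the relation-checking free and keeps $n=2$.
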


\begin{proof}
Let $\gamma$ be a homotopy between $f_1$ and $f_2$. Then applying the contravariant functor $\cRep$ to the simplicial homotopy diagram \eqref{SimpHomotopy} above we obtain

\beqn
 \xymatrix{ 
 & \cRep(\Sigma_1) \\
\cRep(\Sigma_2) 
\ar[ur]^{\tilde{f}_1}
\ar[r]^{\tilde{\gamma}}
\ar[dr]_{\tilde{f}_2} 
& \cRep({\Delta^1}\times\Sigma_1)\; .
\ar[u]_{\tilde{d^0}\times\tilde{1}}
\ar[d]^{\tilde{d^1}\times\tilde{1}} \\ 
& \cRep(\Sigma_1) }. \\
\eeqn Let us define a map 
\beqn
\eta:\cRep(\Delta^1\times\Sigma_1)&\map &\C([0,1],\mathbb{M}_2(\cRep(\Sigma_1))) \\
              {\delta} &\mapsto & \left[t\mapsto \omega_t\left(\begin{smallmatrix}(\tilde{d^0}\times\tilde{1}){(\delta)} & 0\\ 0 &  (\tilde{d^1}\times\tilde{1}){(\delta)}\end{smallmatrix}\right)\omega_t^{-1}\right],
              \eeqn where $\delta$ is any generator of $\cRep(\Delta^1\times\Sigma_1)$, $t\in[0,1]$ and $\omega_t = \left(\begin{smallmatrix}\cos(\frac{\pi t}{2}) & \sin(\frac{\pi t}{2})\\-\sin(\frac{\pi t}{2}) & \cos(\frac{\pi t}{2})\end{smallmatrix}\right)$ is the rotation homotopy matrix. 
                            
Upon identifying $\C([0,1],\mathbb{M}_2(\cRep(\Sigma_1)))\simeq \C([0,1],\cRep(\Sigma_1))$ via the formal {\it proper} inverse of the corner embedding and composing $\eta$ with it, we obtain a map $\cRep(\Sigma_2)\map \C([0,1],\cRep(\Sigma_1))$, which we continue to denote by $\eta$. It is readily seen that $\ev_{t=i}\circ\eta = \tilde{d^i}\times\tilde{1}$, $i=0,1$. Thus one obtains the required homotopy diagram (with homotopy operator $\eta\circ\tilde{\gamma}$)

\beqn
 \xymatrix{ 
 & \cRep(\Sigma_1) \\
\cRep(\Sigma_2) 
\ar[ur]^{\tilde{f}_1}
\ar[r]^{\tilde{\gamma}}
\ar[dr]_{\tilde{f}_2} 
& \cRep(\Delta_1\times\Sigma_1)
\ar[u]_{\tilde{d^0}\times\tilde{1}}
\ar[d]^{\tilde{d^0}\times\tilde{1}}
\ar[r]^{\eta}
& \C([0,1],\cRep(\Sigma_1))\; .
\ar[ul]_{\ev_{t=0}}
\ar[dl]^{\ev_{t=1}} \\ 
& \cRep(\Sigma_1) } \\
\eeqn 

\end{proof} As a consequence we obtain our main Theorem in this section. Let us denote the category of pro $C^*$-algebras with homotopy classes of maps in $\SPAlg$ by $\HoPA$.

\begin{thm}  \label{HomInv}
The functor $\cRep$ induces a functor between the categories $(\PHoSS)^{\op}$ and $\HoPA$.
\end{thm}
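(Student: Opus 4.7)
The strategy is to obtain the desired functor by passing the already-constructed $\cRep\colon(\SSpro)^{\op}\functor\PAlg$ through both the source and target quotients, with all of the nontrivial work done by the preceding Proposition.

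First I would extend the target. The inclusion $\PAlg\hookrightarrow\SPAlg$ followed by the canonical quotient $\SPAlg\functor\HoPA$ yields a functor $\PAlg\functor\HoPA$. Composing with $\cRep$ gives a functor $(\SSpro)^{\op}\functor\HoPA$ sending a proper map $f$ of simplicial sets to the homotopy class $[\tilde{f}]$ of the induced $*$-homomorphism in $\SPAlg$. Since $\cRep$ is already known to respect identities and composition in $\SSpro$, and these properties are preserved by the two functors above, this composite is indeed a functor.

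Next I would descend the source. By definition, morphisms of $\PHoSS$ are equivalence classes of proper maps under the equivalence relation \emph{generated} by proper simplicial homotopy (since simplicial homotopy need not itself be transitive unless the target is fibrant). By the universal property of the quotient category, to produce the induced functor $(\PHoSS)^{\op}\functor\HoPA$ it is enough to check that whenever $f_1,f_2\colon\Sigma_1\map\Sigma_2$ are proper maps related by a single proper simplicial homotopy diagram, the classes $[\tilde{f_1}]$ and $[\tilde{f_2}]$ coincide in $\HoPA$. But this is precisely the content of the preceding Proposition: the homotopy operator $\eta\circ\tilde{\gamma}\colon\cRep(\Sigma_2)\map\C([0,1],\cRep(\Sigma_1))$ constructed there exhibits $\tilde{f_1}\sim\tilde{f_2}$ in $\SPAlg$, hence $[\tilde{f_1}]=[\tilde{f_2}]$ in $\HoPA$. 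Extending by the transitive closure is automatic.

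Finally I would verify functoriality on $(\PHoSS)^{\op}$. Identities are preserved since $\widetilde{\id}=\id$ already in $\PAlg$. For composition, if $[f]$ and $[g]$ are proper homotopy classes with composable proper representatives, then $\cRep(g\circ f)=\tilde{f}\circ\tilde{g}$ in $\PAlg$ and hence in $\HoPA$; independence of the chosen representatives follows from the fact that pre- and post-composition of a proper simplicial homotopy with a proper map is again a proper simplicial homotopy (one simply composes $\gamma$ with the appropriate map in the simplicial homotopy square), together with Step two. The main potential obstacle is purely set-theoretic, namely ensuring that the formal inversion of corner embeddings used to define $\SPAlg$ does not cause size issues when applied to non-$\sigma$ pro $C^*$-algebras produced by $\cRep(\Sigma)$ for uncountable $\Sigma$; this is acknowledged in the preceding remark and handled by working inside a sufficiently large universe, so it does not affect the formal argument above.
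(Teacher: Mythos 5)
Your proposal is correct and follows essentially the same route as the paper, which deduces the theorem directly from the preceding Proposition (properly homotopic simplicial maps induce homotopic morphisms in $\SPAlg$) and leaves the descent to the quotient categories implicit. Your additional care with the generated equivalence relation on the source and the well-definedness of composition only makes explicit what the paper takes for granted.
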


The essential image of this functor is possibly an interesting class of noncommutative pro $C^*$-algebras. Its objects admit simplicial descriptions. We expect the matrix stabilized category of pro $C^*$-algebras with proper maps to be a natural domain to develop a {\it noncommutative proper homotopy theory}.


\bibliographystyle{abbrv}
\bibliography{/Users/smahanta/Professional/math/MasterBib/bibliography}

\vspace{5mm}
\noindent
\address{Email: {\sf smahanta@ihes.fr}}

\end{document}